\theoremstyle{plain}
\newtheorem{theorem}{\noindent\bf Theorem}[section]
\newtheorem{proposition}[theorem]{\noindent\bf Proposition}
\newtheorem{corollary}[theorem]{\noindent\bf Corollary}
\newtheorem{lemma}[theorem]{\noindent\bf Lemma}
\theoremstyle{remark}
\newtheorem{definition}[theorem]{\noindent\bf Definition}
\newtheorem{remark}[theorem]{\noindent \bf Remark}
\newcommand\norm[1]{\left\lVert#1\right\rVert}
\def\RR{\mathbb{R}}
\DeclareMathOperator{\loc}{loc}
\title[On grand Sobolev spaces and pointwise description of BFS]{On grand Sobolev spaces and pointwise description of Banach function spaces}
\author[P. Jain]{Pankaj Jain} 
\address{ Department of Mathematics\\ 
South Asian University\\
Akbar Bhawan, Chanakya Puri, \\
New Delhi - 110 021, India}
\email{pankaj.jain@sau.ac.in and pankajkrjain@hotmail.com}
\author[A. Molchanova]{Anastasia Molchanova} 
\address{Sobolev Institute of Mathematics\\
Academic Koptyug avenue 4 \\
630090 Novosibirsk, Russia\\ 
\and
Institute of Analysis and Scientific Computing\\
TU Wien\\
Wiedner Hauptstra{\ss}e 8--10\\ 
1040 Wien, Austria}
\email[Corresponding author]{anastasia.molchanova@tuwien.ac.at}
\author[M. Singh]{Monika Singh}
\address{Department of Mathematics\\ 
Lady Shri Ram College For Women
(University of Delhi)\\
Lajpat Nagar, New Delhi - 110 024, India}
\email{monikasingh@lsr.du.ac.in}
\author[S. Vodopyanov]{Sergey Vodopyanov}
\address{Sobolev Institute of Mathematics\\
Academic Koptyug avenue 4 \\
630090 Novosibirsk, Russia}
\email{vodopis@math.nsc.ru}
\date{}
\subjclass[2010]{Primary 46E30, Secondary 46E35}
\keywords{Banach function space, grand Sobolev space, maximal function, pointwise description}
\begin{document}

\begin{abstract}
	We obtain a pointwise description of functions belonging to function spaces with the lattice property.
	In particular, it is valid for Banach function spaces provided that the Hardy--Littlewood maximal operator is bounded.
	We also study weighted grand Sobolev spaces, defined on arbitrary open sets $\Omega$ (of finite or infinite measure) in $\RR^n$, and provide pointwise description of them.
\end{abstract}

\maketitle

\section{Introduction}
\setcounter{equation}{0}

Let  $\Omega \subset \RR^n$ be an open connected set having finite or infinite Lebesgue measure,
which we will denote as 
$|\Omega|$. 
The Lebesgue space $L^q(\Omega)$, $1 \le q < \infty$,
is defined to be the space of all measurable functions 
$f$
on $\Omega$ such that
\vspace{-0.125\baselineskip}
\begin{equation*} \label{eq11}
	\|f\|_{L^q(\Omega)}:=\left( \int\limits_\Omega |f(x)|^q \, dx \right)^{1/q} < \infty,
	\vspace{-0.125\baselineskip}
\end{equation*}
and the Sobolev space $W^{1,q}(\Omega)$ is defined to be the space of all $f \in L^q(\Omega)$ such that the weak gradient%
	\footnote{Recall that $\nabla f=(g_1,g_2,\ldots,g_n)\colon\Omega\to\mathbb R^n$  is a  \textit{weak gradient} of $f\colon\Omega\to\mathbb R$ if  
	$f$, ${\nabla f}\in L^1_{\loc}(\Omega)$ and the identity 
	\vspace{-0.125\baselineskip}
	$$
		\int\limits_\Omega f(x) \frac{\partial \varphi}{\partial x_k}(x)\,dx=
		-\int\limits_\Omega g_k(x)  \varphi(x)\,dx
		\vspace{-0.125\baselineskip}
	$$
	holds for all test functions $\varphi\in C_0^\infty(\Omega)$, $k=1,\ldots, n$.}
$\nabla f$ belongs to $ L^q(\Omega)$ 
with a finite norm
$$
	\|f\|_{W^{1,q}(\Omega)}:=\|f\|_{L^q(\Omega)} + \|\nabla f\|_{L^q(\Omega)}.
$$

Sobolev functions are known to have the following pointwise characterization, which is a classic nowadays.

\begin{theorem}\label{thm:A}
	Let $1< q < \infty$. 
	A function  $f $ belongs to $W^{1,q}(\RR^n)$ if and only if $f \in L^q(\RR^n)$ and there exists a non negative $g \in L^q(\RR^n)$ such that the inequality 
	\begin{equation} \label{pointwise_estimate_sobolev}
		|f(x)-f(y)| \le |x-y|(g(x)+g(y))
	\end{equation}
	holds for all $x$, $y$ outside of some set $\Sigma\subset \RR^n$ of measure zero. 
\end{theorem}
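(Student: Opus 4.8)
The plan is to prove the two implications separately: the forward one rests on the Hardy--Littlewood maximal operator together with the Poincaré inequality, while the converse rests on the translation (difference--quotient) characterization of $W^{1,q}$, where the reflexivity of $L^q$ for $1<q<\infty$ enters decisively.

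\textbf{Necessity.} Assume $f\in W^{1,q}(\RR^n)$ and set $g:=c_n\,M(|\nabla f|)$, where $M$ is the Hardy--Littlewood maximal operator and $c_n$ a dimensional constant to be fixed. Since $1<q<\infty$, $M$ is bounded on $L^q(\RR^n)$, so $g\in L^q(\RR^n)$ with $\|g\|_{L^q}\le C\|\nabla f\|_{L^q}$. To obtain the pointwise bound I would first establish the oscillation estimate $|f(x)-f_B|\le C\,r\,M(|\nabla f|)(x)$ at every Lebesgue point $x$ of $f$, where $B=B(x,r)$ and $f_B=\frac1{|B|}\int_B f\,dz$. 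This follows from a telescoping sum over the balls $B_k=B(x,2^{-k}r)$: each increment satisfies $|f_{B_{k+1}}-f_{B_k}|\le 2^n\frac1{|B_k|}\int_{B_k}|f-f_{B_k}|\,dz$, to which the Poincaré inequality and the definition of $M$ apply, and the resulting geometric series sums to $C\,r\,M(|\nabla f|)(x)$. Given two Lebesgue points $x,y$ with $r=|x-y|$, inserting the averages over $B(x,r)$, $B(y,r)$ and the enveloping ball $B(x,2r)$ and estimating each difference of averages as above yields $|f(x)-f(y)|\le C\,r\,(M(|\nabla f|)(x)+M(|\nabla f|)(y))$. Taking $\Sigma$ to be the complement of the set of Lebesgue points of $f$, which is null by the Lebesgue differentiation theorem, gives inequality \eqref{pointwise_estimate_sobolev}.

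\textbf{Sufficiency.} Conversely, suppose $f\in L^q(\RR^n)$ and \eqref{pointwise_estimate_sobolev} holds for some nonnegative $g\in L^q$. Fixing $h\in\RR^n$ and applying the inequality with $y=x+h$ (legitimate for a.e.\ $x$, since both $\Sigma$ and $\Sigma-h$ are null) gives $|f(x+h)-f(x)|\le |h|\,(g(x+h)+g(x))$ a.e.; raising to the power $q$, integrating, and using Minkowski's inequality yields the translation bound $\|f(\cdot+h)-f\|_{L^q}\le 2|h|\,\|g\|_{L^q}$. Consequently, for each $i$ the difference quotients $(f(\cdot+te_i)-f)/t$ are bounded in $L^q$ uniformly in $t$. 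Because $1<q<\infty$, $L^q$ is reflexive, so along some sequence $t_j\to0$ they converge weakly to some $g_i\in L^q$; testing against $\varphi\in C_0^\infty(\RR^n)$, transferring the difference quotient onto $\varphi$, and passing to the limit (the difference quotients of $\varphi$ converging uniformly to $\partial_i\varphi$, with $f\in L^1_{\loc}$) identifies $g_i$ as the weak partial derivative $\partial_i f$. Hence $\nabla f\in L^q(\RR^n)$ and $f\in W^{1,q}(\RR^n)$.

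\textbf{Main obstacle.} The delicate point is the sufficiency: the passage from a uniform bound on difference quotients to a genuine weak gradient in $L^q$ hinges on weak sequential compactness, hence on the reflexivity of $L^q$, which is exactly why the hypothesis $1<q<\infty$ (rather than $q=1$) is needed here, mirroring its role in the forward direction through the $L^q$-boundedness of $M$. A minor but necessary bookkeeping step throughout is controlling the exceptional null set $\Sigma$ under translations, so as to guarantee the a.e.\ validity of the difference--quotient inequality for each fixed $h$.
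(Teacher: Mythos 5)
Your proof is correct, but in the sufficiency direction it takes a genuinely different route from the one the paper uses (the paper does not prove Theorem~\ref{thm:A} in isolation; it is the special case $X=L^q$ of Theorem~\ref{thm:D}, so that proof is the natural comparison). For necessity the two arguments are essentially the same: both reduce to the pointwise bound $|f(x)-f(y)|\le C|x-y|\bigl(M(|\nabla f|)(x)+M(|\nabla f|)(y)\bigr)$ and then invoke the $L^q$-boundedness of $M$; the paper reaches this via the Riesz-potential estimate of Proposition~\ref{pi} combined with Hedberg's dyadic-annuli estimate, while you use a chaining/telescoping argument over shrinking balls with the $(1,1)$-Poincar\'e inequality --- interchangeable standard devices. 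For sufficiency you extract the weak gradient as a weak limit of difference quotients, which hinges on the reflexivity of $L^q$ for $1<q<\infty$; the paper instead truncates on the level sets $A_k=\{g\le k\}$, where \eqref{pointwise_estimate_sobolev} makes $f$ Lipschitz with constant $2k$, extends by Kirszbraun, proves $|\partial\tilde f_k/\partial x_i|\le 2g$ a.e.\ on $A_k$ through a density-point/Lebesgue-point analysis, and passes to the limit by dominated convergence. Your route is shorter and entirely adequate for Theorem~\ref{thm:A} (and your handling of the exceptional set under translation is the right bookkeeping), but it buys less generality: it does not transplant to Theorem~\ref{thm:D}, where the lattice $X$ need not be reflexive (the paper itself notes that grand Lebesgue spaces are non-reflexive), whereas the Lipschitz-truncation argument uses only the lattice property and the boundedness of $M$.
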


\noindent The necessity part of Theorem~\ref{thm:A} has been firstly obtained in \cite{Boj1991}, while the sufficiency 
has been proved independently in \cite{Haj1996} and \cite{Vod1996}. Afterwards, this description turned out to be useful in various aspects of analysis 
such as Sobolev spaces with higher-order derivatives
\cite{BojHaj1993}, Hardy--Sobolev spaces \cite{KosSak2008}, Sobolev spaces on Carnot groups \cite{Vod1996}, quasiconformal analysis on Carnot groups \cite{Vod2007}, and many others.
Since the formula \eqref{pointwise_estimate_sobolev} does not involve the notion of derivative, 
it became a starting point to define 
a counterpart of Sobolev spaces on metric structures and to investigate its properties \cite{Haj1996}. 
From then on the research in analysis on metric spaces is making steady headway, due to various applications in geometric group theory and nonlinear PDEs.
For comprehensive coverage on this topic the reader is referred to \cite{HeiKosShaTys2015,BonCapHajShaTys2020} and references therein.

On the other hand, Sobolev spaces based on functional classes other than those of Lebesgue functions are of interest for their relevance in a number of fields. 
For instance, there are advantages in using Sobolev--Orlicz spaces for nonlinear elasticity \cite{Bal1977}, Lorentz spaces for the Schr\"{o}dinger equation \cite{BreGal1980} and for the $p$-Laplace system \cite{AlbCiaSbo2017}, grand Sobolev spaces for $p$-harmonic operators \cite{DonSboSch2013,GreIwaSbo1997}, Sobolev spaces with variable exponent for fluid dynamics \cite{DieRuz2003} and many other examples. 
Instead of working with each special case separately, one can 
consider so-called Banach function spaces, 
see \cite{BenSha1988,PicKufJohFuc2013}.
This general approach has recently been very fruitful to extend classical results on different function spaces, for example, the regularity of solutions of PDEs \cite{AlbCiaSbo2017}, the Gagliardo--Nirenberg inequality \cite{FioForRosSou2019} and regularity properties of the inverse mapping \cite{MolRosSou2019}.

In this paper, we ask if a characterization similar to \eqref{pointwise_estimate_sobolev} can be obtained for different function classes. 
For example, a pointwise description in terms of the Young function has been obtained for Sobolev--Orlicz mappings in \cite{Tuo2007}.
The main result of the paper is a pointwise estimate in the spirit of \eqref{pointwise_estimate_sobolev} stated in Theorem~\ref{thm:D} for function spaces with the lattice property provided that the Hardy--Littlewood maximal operator is bounded.
It covers, in particular the concept of rearrangement invariant Banach function spaces, see Corollary~\ref{cor:BFS}, a series of new characterizations of various spaces is also available, see Corollaries~\ref{cor:BFS}--\ref{cor:weighted_Sobolev}. 
Moreover, 
following \cite{SamUma2011,Uma2014}, we define and study the weighted grand Sobolev space $W^{1,q)}_{a}(\Omega,w)$, when $\Omega$ may be unbounded. 
Although these spaces are not Banach function spaces, 
the corresponding description 
is valid, see Corollary~\ref{cor:GS}.

\newpage
\section{The pointwise estimate for Banach spaces with the lattice property}\label{sec:pointwise_estimates}

Let 
$\Omega\subset\RR^n$ be an open set and
$X(\Omega)\subset L^1_{\loc} (\Omega)$ be a Banach space with a norm $\|\cdot\|_{X(\Omega)}$. 
The \textit{homogenious Sobolev space} $L^1X (\Omega)$ 
and \textit{Sobolev space}
$W^1 X (\Omega)$
denote the space of weakly differentiable functions
$f$
with
$\nabla f\in X(\Omega)$
and
$f$, 
$\nabla f\in X(\Omega)$,
respectivily. 
These spaces are equipped with a semi-norm
$\|f\|_{L^1 X (\Omega)}:=\left\|\nabla f\right\|_{X(\Omega)}$
and a norm
$\|f\|_{W^1 X (\Omega)}:=\left\|f\right\|_{X(\Omega)}+\left\|\nabla f\right\|_{X(\Omega)}$.
We say that the norm $\|\cdot\|_{X(\Omega)}$ satisfies the \textit{lattice property} if
for $f$, $g\in X(\Omega)$ such that 
$|f|\leq|g|$ a.e.\ it holds $\|f\|_{X(\Omega)}\leq\|g\|_{X(\Omega)}$.

\begin{definition}
	Let $g\in L^{1}_{\loc}(\mathbb R^n)$ and $0< t \le \infty$.
	We define the {\it Hardy--Littlewood maximal operator} or simply the {\it maximal operator} by
	\vspace{-0.1\baselineskip}
	$$
		M_{t}g(x):=\sup_{0<r\leq t}\fint\limits_{B(x,r)}|g(y)|\,dy,
		\vspace{-0.5\baselineskip}
	$$
	where
	$B(x,r)$ is a ball with radius $r>0$ and center $x \in \mathbb R^n$,
	and
	$\displaystyle\fint_{B} f(x)\,dx$ denotes integral average of $f$ over $B$, i.e.
	\vspace{-0.1\baselineskip}
	\begin{equation}\label{def:average}
		f_B=\fint\limits_{B}f(x)\,dx:=\frac{1}{|B|}\int\limits_{B}f(x)\,dx.
		\vspace{-0.1\baselineskip}
	\end{equation}
	In the case $t=\infty$, we write $Mg(x)$ instead of $M_\infty g(x)$.
\end{definition}

The class of Banach spaces $X(\Omega)\subset L^1_{\loc} (\Omega)$ with the lattice property, s.t.\ the maximal operator $M$ is bounded on $X(\Omega)$,
includes many and various spaces, for example, weighted Lebesgue (with Muckenhaupt's weight), grand Lebesgue, Musielak--Orlicz, Lorentz and Marcinkiewicz spaces, as well as Lebesgue spaces with variable exponents.
In particular, it includes the general concept of Banach function spaces.

The main result of the paper reads as follows.

\begin{theorem}\label{thm:D} 
Let $\Omega\subseteq \RR^n$ be a domain,
$X(\Omega) \subset L^1_{\loc} (\Omega)$ be a Banach space such that 
$X(\Omega)$ satisfies the lattice property
and the maximal operator $M\colon X(\Omega) \to X(\Omega)$ is bounded.
Then  $f \in W^1 X (\Omega)$ if and only if 
$f \in X (\Omega)$ and there exists a non negative function $g\in X(\Omega)$ such that the inequality
\begin{equation}\label{min3}
	|f(x)-f(y)|\leq |x-y| (g(x)+g(y))
\end{equation}
holds for all $x$, $y \in \Omega \setminus S$ with $B(x,3|x-y|)\subset\Omega$,
where $S\subset\Omega$ is a set of measure zero.
\end{theorem}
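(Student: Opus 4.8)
The plan is to prove the two implications separately, in each case reducing the Banach-space statement to a classical fact about $W^{1,1}_{\loc}(\Omega)$ and letting the space $X(\Omega)$ enter only through the lattice property (in both directions) and through the boundedness of $M$ (only for necessity).

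For the necessity, suppose $f\in W^1X(\Omega)$, so that $\nabla f\in X(\Omega)\subset L^1_{\loc}(\Omega)$. I would take $g:=c_n\,M|\nabla f|$ for a dimensional constant $c_n$, where $|\nabla f|$ is extended by zero outside $\Omega$ so that $M$ is the operator of the Definition. The inequality \eqref{min3} is then the standard maximal-function estimate for Sobolev functions: writing $r=|x-y|$ and using the telescoping decomposition
\begin{equation*}
|f(x)-f(y)|\le |f(x)-f_{B(x,2r)}|+|f_{B(x,2r)}-f_{B(y,2r)}|+|f_{B(y,2r)}-f(y)|,
\end{equation*}
one estimates the first and third terms by chaining over the concentric balls $B(x,2^{1-k}r)$, resp.\ $B(y,2^{1-k}r)$, and the middle term by comparing both averages to $f_{B(x,3r)}$. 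Each step is controlled by the $L^1$-Poincar\'e inequality on a ball, and summing the resulting geometric series yields $|f(x)-f(y)|\le c_n r\,(M|\nabla f|(x)+M|\nabla f|(y))$ at every pair of Lebesgue points; the exceptional set $S$ is the complement of the common Lebesgue set of $f$. The hypothesis $B(x,3|x-y|)\subset\Omega$ is exactly what guarantees that all balls entering this chain lie inside $\Omega$, so that Poincar\'e applies. Since $\nabla f\in X(\Omega)$ and $M\colon X(\Omega)\to X(\Omega)$ is bounded, $g=c_nM|\nabla f|\in X(\Omega)$, and absorbing $c_n$ into $g$ finishes this direction.

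For the sufficiency, suppose $f,g\in X(\Omega)$ with $g\ge 0$ and \eqref{min3}. Since $X(\Omega)\subset L^1_{\loc}(\Omega)$, it suffices to show that $f$ has a weak gradient with $|\nabla f|\le 2g$ a.e.; the lattice property then immediately gives $\nabla f\in X(\Omega)$ and hence $f\in W^1X(\Omega)$, and this half uses neither the boundedness of $M$ nor any reflexivity of $X(\Omega)$. Fix a coordinate direction $e_k$ and, for small $h$, consider the difference quotient $\Delta_h^k f(x)=(f(x+he_k)-f(x))/h$. Applying \eqref{min3} with $y=x+he_k$ gives $|\Delta_h^k f(x)|\le g(x)+g(x+he_k)$ for a.e.\ $x$ in any $\Omega'\Subset\Omega$ once $|h|$ is small enough that $B(x,3|h|)\subset\Omega$; the discarded null set is $S\cup(S-he_k)$. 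The right-hand side is dominated by $g$ plus a translate of $g$, so the absolute continuity of the integral of $g$ (applied to translated sets) shows that $\{\Delta_h^k f\}_h$ is uniformly integrable and bounded on $\Omega'$. By the Dunford--Pettis theorem a sequence $h_j\to 0$ can be chosen along which $\Delta_{h_j}^k f\rightharpoonup v_k$ weakly in $L^1(\Omega')$; testing against $\varphi\in C_0^\infty(\Omega')$ and passing to the limit in the identity $\int \Delta_{h_j}^k f\,\varphi = -\int f\,\Delta_{-h_j}^k\varphi$ identifies $v_k$ as the weak derivative $\partial_k f$.

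It remains to transfer the pointwise bound to the limit, and this is the step I expect to require the most care. For every measurable $E\subset\Omega'$ one has $\int_E v_k=\lim_j\int_E\Delta_{h_j}^k f\le\limsup_j\int_E(g+g(\cdot+h_je_k))=2\int_E g$, using $L^1$-convergence of the translates; the same bound for $-v_k$, together with the choices $E=\{v_k>2g\}$ and $E=\{v_k<-2g\}$, force $|v_k|\le 2g$ a.e. Thus $|\partial_k f|\le 2g$ for each $k$, so $|\nabla f|\le 2\sqrt n\,g$ a.e., and the lattice property yields $\nabla f\in X(\Omega)$. The delicate point is precisely this passage from \eqref{min3}, assumed only off the fixed null set $S$, to an honest a.e.\ bound on the weak limit: one must keep track of the translated exceptional sets $S-h_je_k$ along the sequence and lean on uniform integrability rather than on any pointwise convergence. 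By comparison the necessity direction is essentially the classical argument, its only genuinely new ingredient being the bookkeeping that keeps every averaging ball inside the domain $\Omega$.
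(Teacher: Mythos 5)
Your proposal is correct, and the two halves relate differently to the paper's proof. For the necessity you and the paper do essentially the same thing: both reduce \eqref{min3} to the bound $|f(x)-f(y)|\le c_n|x-y|\bigl(M|\nabla f|(x)+M|\nabla f|(y)\bigr)$ and then invoke the boundedness of $M$ on $X(\Omega)$; the paper gets there via the Riesz-potential estimate of Proposition~\ref{pi} followed by Hedberg's dyadic-annuli decomposition of $\int_{2B}|\nabla f(z)|\,|x-z|^{1-n}dz$, whereas you chain averages over the concentric balls $B(x,2^{1-k}r)$ with the $L^1$-Poincar\'e inequality --- a cosmetic difference, and your observation that $B(x,3|x-y|)\subset\Omega$ is exactly what keeps all the averaging balls inside $\Omega$ matches the role of the ball $B_1=B(x,3|x-\xi|)$ in the paper. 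The sufficiency is where you genuinely diverge. The paper truncates at the level sets $A_k=\{g\le k\}$, observes that $f$ is $2k$-Lipschitz there, extends by Kirszbraun to Lipschitz functions $\tilde f_k$ on all of $\RR^n$, proves $|\partial_i\tilde f_k|\le 2g$ a.e.\ on $A_k$ by a delicate density-point and Lebesgue-point argument along lines, and finally passes to the limit $k\to\infty$ with dominated convergence to identify the weak gradient. You instead work with difference quotients $\Delta_h^kf$, dominate them by $g$ plus a translate of $g$, extract a weak $L^1$ limit via uniform integrability and Dunford--Pettis, and transfer the bound $|v_k|\le 2g$ by testing against the level sets $\{v_k>2g\}$ and $\{v_k<-2g\}$, using continuity of translation in $L^1$ to handle the translated copies of $g$ and of the null set $S$. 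Both routes are sound and both use only $g\in L^1_{\loc}$ plus the lattice property in this direction; yours is shorter and avoids the extension theorem entirely, at the cost of invoking weak sequential compactness in $L^1$, while the paper's argument is more elementary (no functional-analytic compactness) and additionally produces the global Lipschitz approximations $\tilde f_k$ of $f$, which are of independent interest.
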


\begin{remark}
Note, that there are no restrictions on the geometry of $\Omega$.
\end{remark}

Before proving Theorem~\ref{thm:D} we recall the following estimates. 
For the convenience of the reader, we also provide here 
a new proof of Proposition~\ref{pi}.

\begin{lemma}[{\hspace{-.1pt}\cite[4.5.2, Lemma 1]{EvaGar1992}}]\label{lemma1pi} 
There exists a constant $C=C(n)$ such that, 
for any ball $B(\xi,r)\subset \mathbb R^n$ and function $f\in C^1(B(\xi,r))$,
the following inequality
\vspace{-0.1\baselineskip} 
\begin{equation*}
	\int\limits_{B(\xi,r)}|f(y)-f(z)|\,dz\leq Cr^n\int\limits_{B(\xi,r)}\frac{|\nabla f(z)|}{|y-z|^{n-1}}\,dz
	\vspace{-0.1\baselineskip}
\end{equation*}
holds  for any point $y\in B(\xi,r)$.
\end{lemma}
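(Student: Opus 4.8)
The plan is to prove the Riesz-potential integral inequality of Lemma~\ref{lemma1pi} directly by representing the difference $f(y)-f(z)$ as a line integral of the gradient and then averaging over $z$ in the ball. Because $f\in C^1(B(\xi,r))$, for any two points $y,z$ in the convex set $B(\xi,r)$ I would write
\begin{equation*}
	f(y)-f(z)=\int\limits_0^1 \nabla f\bigl(z+s(y-z)\bigr)\cdot (y-z)\,ds,
\end{equation*}
so that
\begin{equation*}
	|f(y)-f(z)|\leq |y-z|\int\limits_0^1 \bigl|\nabla f\bigl(z+s(y-z)\bigr)\bigr|\,ds.
\end{equation*}
First I would integrate this bound in $z$ over $B(\xi,r)$, giving a double integral in $(z,s)$ of $|y-z|\,|\nabla f(z+s(y-z))|$.

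The key computational step is a change of variables in the inner integral. For each fixed $s\in(0,1)$ I substitute $w=z+s(y-z)$, i.e.\ $w-y=(1-s)(z-y)$, which yields $z-w=s(y-z)$ no longer being the right target, so instead I track the variable along which the gradient is evaluated: setting $w=z+s(y-z)$ gives $dz=(1-s)^{-n}\,dw$ and $|y-z|=(1-s)^{-1}|y-w|$. The point $w$ ranges over a ball obtained by shrinking $B(\xi,r)$ toward $y$, which is contained in $B(\xi,r)$ by convexity, so I may enlarge the domain of the $w$-integral to all of $B(\xi,r)$. After interchanging the order of integration (justified by nonnegativity via Tonelli), the $s$-integral factors out and I obtain a bound of the form
\begin{equation*}
	\int\limits_{B(\xi,r)}|f(y)-f(z)|\,dz\leq \Bigl(\int\limits_0^1 (1-s)^{-n}\,ds\Bigr)\,\Bigl(\text{something}\Bigr),
\end{equation*}
which diverges; hence a naive single substitution is insufficient and one must balance the powers of $(1-s)$ more carefully.

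The real mechanism, which I would carry out, is to keep the factor $|y-z|$ in terms of $w$ and use $|y-z|=(1-s)^{-1}|y-w|$ together with $dz=(1-s)^{-n}dw$, but to cut the $s$-integral at the radial distance: for a fixed direction, the substitution sweeps out a cone, and integrating $(1-s)^{-n}$ against $|y-w|$ over the appropriate range produces exactly the kernel $|y-w|^{-(n-1)}$. Concretely, writing everything in polar coordinates centered at $y$ and performing the $s$-integration along each ray shows that the accumulated weight at a point $w$ at distance $\rho=|y-w|$ from $y$ is comparable to $\rho^{-(n-1)}$ times a factor controlled by $r^n$, because the total measure of source points $z$ mapping near $w$ scales like $r^n\rho^{-(n-1)}$. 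This is the standard derivation of the Riesz potential representation of Sobolev functions, and the constant $C=C(n)$ emerges from the solid-angle integration, independent of $r$ and $\xi$.

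The main obstacle is precisely this change-of-variables bookkeeping: one must correctly account for how the line-integral parametrization distributes mass over the ball so that the divergent-looking factor $(1-s)^{-n}$ is absorbed into the genuinely integrable Riesz kernel $|y-w|^{-(n-1)}$. The cleanest route is to fix the direction $\omega=(w-y)/|w-y|$, integrate radially first along the segment from $y$ toward the boundary, and recognize that summing the gradient contributions along each ray and then over all rays and all starting points $z$ reproduces $\int_{B}|\nabla f(w)|\,|y-w|^{-(n-1)}\,dw$ up to the geometric constant $Cr^n$. I would present this via the polar-coordinate computation rather than a single global substitution, since the former makes the cancellation of powers transparent and delivers the claimed dimensional constant directly.
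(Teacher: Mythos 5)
Your argument is correct, and it is essentially the paper's own proof in the only sense available: the paper does not prove this lemma but cites Evans--Gariepy (4.5.2, Lemma~1), whose proof is exactly your line-integral representation $f(y)-f(z)=\int_0^1\nabla f(z+s(y-z))\cdot(y-z)\,ds$ followed by Tonelli and the radial/change-of-variables bookkeeping in which the cut-off of the $s$-range produces the Riesz kernel. One detail to tighten when you write it out: after substituting $w=z+s(y-z)$ the combined factor is $(1-s)^{-n-1}$ (namely $(1-s)^{-n}$ from $dz$ and $(1-s)^{-1}$ from $|y-z|=(1-s)^{-1}|y-w|$), and for fixed $w$ the constraint that $w$ lie in the shrunken ball forces $1-s\ge |w-y|/(2r)$, so that $\int_0^{1-|w-y|/(2r)}(1-s)^{-n-1}\,ds\le \frac{(2r)^n}{n}\,|w-y|^{-n}$, which multiplied by $|y-w|$ yields precisely the kernel $C(n)\,r^n\,|y-w|^{-(n-1)}$ with $C=2^n/n$, confirming the scaling you asserted.
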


\begin{proposition}[{\hspace{-.1pt}\cite[Lemma 7.16]{GilTru1983}}]\label{pi}
If 
$f\in W^{1,1}_{\loc}(\mathbb R^n)$ then, for every ball $B:=B(\xi,r)\subset \mathbb R^n$ centered at 
$\xi\in\RR^n$ of radius $r$,
the inequality
\begin{equation} \label{41}
	|f(x)-f_B|\leq C\int\limits_{B}
	\frac{|\nabla f(y)|}{|x-y|^{n-1}}\,dy
\end{equation}
holds for  every $x\in B\setminus S_B$, where  $S_B$ is measurable subset of $B$ of zero Lebesgue measure:  
$|S_B|=0$,
and $f_B$ is the integral average 
defined by \eqref{def:average}.
\end{proposition}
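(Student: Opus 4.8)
The plan is to prove the estimate first for smooth functions, where it is an immediate consequence of Lemma~\ref{lemma1pi}, and then to reach the general case $f\in W^{1,1}_{\loc}(\RR^n)$ by mollification. For $f\in C^1(B)$, I would begin from the elementary Jensen-type bound
\[
	|f(x)-f_B|=\left|\fint\limits_B\bigl(f(x)-f(z)\bigr)\,dz\right|\le\fint\limits_B|f(x)-f(z)|\,dz,
\]
and then estimate the averaged integral on the right by Lemma~\ref{lemma1pi}. Dividing the inequality of Lemma~\ref{lemma1pi} by $|B|$ and observing that $r^n/|B|$ is a dimensional constant turns the prefactor $Cr^n/|B|$ into a constant depending on $n$ alone, which yields \eqref{41} for \emph{every} $x\in B$, with no exceptional set needed.

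To pass to $f\in W^{1,1}_{\loc}(\RR^n)$, I would fix the ball $B=B(\xi,r)$, pick a slightly larger concentric ball $B'$ with $\overline{B}\subset B'$ on which $f\in W^{1,1}(B')$, and mollify, setting $f_\varepsilon=f*\rho_\varepsilon$ for a standard mollifier $\rho_\varepsilon$. For $\varepsilon$ small enough $f_\varepsilon\in C^\infty(\overline{B})$, and the standard convergence of mollifications on compact subsets gives $f_\varepsilon\to f$ in $W^{1,1}(B)$; in particular $\nabla f_\varepsilon\to\nabla f$ in $L^1(B)$ and $(f_\varepsilon)_B\to f_B$. The already-established $C^1$ estimate applied to $f_\varepsilon$ reads, for every $x\in B$,
\[
	|f_\varepsilon(x)-(f_\varepsilon)_B|\le C\int\limits_B\frac{|\nabla f_\varepsilon(y)|}{|x-y|^{n-1}}\,dy,
\]
and the task reduces to letting $\varepsilon\to0$ on both sides.

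The delicate point, which I expect to be the main obstacle, is the passage to the limit on the right-hand side: the kernel $|x-y|^{1-n}$ is unbounded, so $L^1$-convergence of $\nabla f_\varepsilon$ does not transfer at a \emph{fixed} $x$ to convergence of the weighted integrals. I would circumvent this by regarding the right-hand side as the Riesz-type potential
\[
	I_1 h(x):=\int\limits_B\frac{h(y)}{|x-y|^{n-1}}\,dy,
\]
and noting that $I_1$ maps $L^1(B)$ boundedly into $L^1(B)$: by Tonelli's theorem $\int_B I_1|h|(x)\,dx=\int_B|h(y)|\bigl(\int_B|x-y|^{1-n}\,dx\bigr)\,dy\le Cr\,\|h\|_{L^1(B)}$, since the inner integral is bounded uniformly in $y$ by enlarging $B$ to $B(y,2r)$. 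Applying this to $h=|\nabla f_\varepsilon|-|\nabla f|$ and using $\nabla f_\varepsilon\to\nabla f$ in $L^1(B)$ shows that $I_1|\nabla f_\varepsilon|\to I_1|\nabla f|$ in $L^1(B)$.

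Finally I would assemble the two convergences. Passing to a subsequence, $f_\varepsilon\to f$ and $I_1|\nabla f_\varepsilon|\to I_1|\nabla f|$ both hold almost everywhere in $B$; combined with $(f_\varepsilon)_B\to f_B$ this permits taking the limit in the $C^1$ inequality for a.e.\ $x\in B$, giving \eqref{41} outside a set $S_B\subset B$ with $|S_B|=0$. Since $I_1|\nabla f|\in L^1(B)$ is finite almost everywhere, the right-hand side of \eqref{41} is well defined off $S_B$, which completes the argument.
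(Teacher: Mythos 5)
Your proof is correct and follows essentially the same route as the paper: establish \eqref{41} for $C^1$ functions via Lemma~\ref{lemma1pi} and a Jensen-type bound, then pass to general $f\in W^{1,1}_{\loc}(\RR^n)$ by mollification, with the limit on the right-hand side justified by the $L^1(B)\to L^1(B)$ boundedness of the Riesz potential (which is precisely the displayed estimate the paper invokes before saying ``standard approximation arguments''). The only cosmetic difference is that the paper's $C^1$ step detours through an average over small balls $B(x,\rho)$ and the Lebesgue Differentiation Theorem, whereas you apply Lemma~\ref{lemma1pi} directly at the point $x$, and you spell out the approximation step that the paper leaves implicit.
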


\begin{proof}

By Lemma \ref{lemma1pi}, for a ball  $B:=B(\xi,r)\subset \mathbb R^n$,  $f\in C^1(B)$,  $x\in B$ and $\rho$
small enough ($B(x,\rho)\subset B$),
we come to the following inequalities
\begin{multline*}
\begin{aligned}
	\fint\limits_{B(x,\rho)}|f(y)-f_B|\,dy & =
	\fint\limits_{B(x,\rho)}\bigg|\fint\limits_{B} (f(y)-f(z))\,dz\bigg|\,dy
	\\
	& \leq
	\fint\limits_{B(x,\rho)}\fint\limits_{B} |f(y)-f(z)|\,dz\,dy \\
	& \leq
	C r^n\fint\limits_{B(x,\rho)}\fint\limits_{B} \frac{|\nabla f(z)|}{|y-z|^{n-1}}\,dz\,dy.
\end{aligned}
\end{multline*}
Now the Lebesgue Differentiation Theorem, with $\rho\to 0$, gives just \eqref{41}
for $C^1$-functions.

In view of the estimate
\begin{equation*} \label{42}
	\int\limits_{B}|f(x)-f_B|\,dx\leq 
	C\int\limits_{B}\int\limits_{B}\frac{|\nabla f(y)|}{|x-y|^{n-1}}\,dy\,dx\leq C_1r\int\limits_{B}|\nabla f(y)|\,dy
\end{equation*}
we can apply standard approximation arguments and to justify 
 inequality \eqref{41} for any $f\in W^{1,1}_{\loc}(\mathbb R^n)$.  
\end{proof}

\begin{proof}[Proof of Theorem~\ref{thm:D}]
Let $\Omega = \RR^n$ (the case $\Omega \subset \RR^n$ is considered in the same way) and
$f\in L^1 X(\mathbb R^n) \cap L^1_{\loc} (\mathbb R^n)$ and note that \eqref{41} holds for $f$. 
Now, for
$x$, $y\in \mathbb R^n$, consider a~ball $B$ of the least radius
with center $\xi=\frac12(x+y)$ such that $x$, $y\in\overline B$
and
$|x-\xi|=|y-\xi|$.
Applying \eqref{41}, we obtain
\begin{equation}\label{t1}
\begin{aligned}
	|f(x)-f(y)| &\leq |f(x)-f_B|+|f(y)-f_B|\\
	& \leq C \left(\,\int\limits_{2B}
	\frac{|\nabla f(z)|}{|x-z|^{n-1}}\,dz +
	\int\limits_{2B}
	\frac{|\nabla f(z)|}{|y-z|^{n-1}}\,dz\right)\\
	 &:= C(I_1+I_2)
\end{aligned}
\end{equation}
for all $x$, $y\in B\setminus S_B$ where $|S_B|=0$.

It is obvious that
$2B \subset B_1=B(x, t)$
for
$t=3|x-\xi|$.
Thus, we get the estimate,
which first appeared in \cite{Hed1972},
\begin{equation*}\label{t2}
\begin{aligned}
	I_1 &\leq \int\limits_{B_1}
	\frac{|\nabla f(z)|}{|x-z|^{n-1}}\,dz \\
	& =
	\sum _{j=1}^{\infty}
	\int\limits_{\frac{1}{2^{j-1}}B_1\setminus\frac{1}{2^j}B_1}
	\frac{|\nabla f(z)|}{|x-z|^{n-1}}\,dz \nonumber\\
	&\leq \omega_nt\sum_{j=1}^{\infty}\frac{2^n}{2^j}\frac{1}{\bigl|\frac{1}{2^{j-1}}B_1\bigr|}
	\int\limits_{\frac{1}{2^{j-1}}B_1}
	|\nabla f(z)|\,dz  \nonumber\\
	&\leq 2^{n}\omega_nt M_{t}(|\nabla f|)(x)
\end{aligned}
\end{equation*}
where $\omega_n$ is the volume of the unit ball.
Similarly, $I_2$ can be estimated. Using these estimates,  
\eqref{t1} gives
\begin{equation}\label{t3}
	|f(x)-f(y)| \leq 3C\cdot 2^{n-1}\omega_n|x-y|
	(M_{t}(|\nabla f|)(x)
	+M_{t}(|\nabla f|)(y))
\end{equation}
for all $x$, $y\in B\setminus S_B$ with $|S_B|=0$. 
Now, take $g= 3C\cdot2^{n-1}\omega_n M(|\nabla f|)$.
Since $|\nabla f| \in X(\mathbb R^n)$ 
and the maximal operator $M \colon X(\mathbb R^n) \to X(\mathbb R^n)$ is bounded,
we conclude that  $g \in X(\mathbb R^n)$. 
Hence, by \eqref{t3} we obtain~\eqref{min3}.

Conversely, 
 the inequality  \eqref{min3} holds
with  some functions 
$f\in L^1_{\loc}(\mathbb R^n)$
 and  
 $g\in X(\mathbb R^n)$ 
for almost all  $x$, $y \in \mathbb R^n\setminus S$ where  $S$ is some set of zero measure. 
Under hypothesis of theorem
we have $g\in L^1_{\loc}(\mathbb R^n)$. 

For any   $k\in\mathbb N$ we define a set 
$$ 
A_k=\{x \in \mathbb R^n\setminus S: g(x)\leq k \}.
$$
Then for all points  $x$, $y\in  A_k$ we have 
\begin{equation*}
	|f(x)-f(y)|\leq2k|x-y|.
\end{equation*}
Therefore  $f$ is a Lipschitz function
on the set $A_k$
in the conventional sense.
Applying the Kirszbraun extension theorem \cite[3.1, Theorem 1]{EvaGar1992},
we obtain an~extension $\tilde f_k\colon \mathbb R^n \to \mathbb R$ of~
$f\colon A_k\to \mathbb R$ to a Lipschitz function on 
$\mathbb R^n$ with the same Lipschitz constant. In particular,
for all points  $x$, $y\in  \mathbb R^n\setminus A_k$ 
we have
 \begin{equation}\label{Lipfun}
	|\tilde f(x)-\tilde f(y)|\leq2k|x-y|.
\end{equation}

Take an arbitrary vector  $e_i$ of the standard basis in  $\mathbb R^n$, $i=1,2,...,n$.
We will write points  $x\in \mathbb R^n$  as 
$x=(\bar x, x_i)=\bar x+ x_ie_i$  where  $\bar x\in \mathbb R^{n-1}$ and $x_i\in \mathbb R$.
 For any 
 $\bar x\in \mathbb R^{n-1}$ the restriction  
$\mathbb R\ni x_i\mapsto \tilde f_k(\bar x+x_i e_i) $ is a Lipschitz function with respect to  $x_i\in \mathbb R$. 
Hence it is absolutely continuous on every line parallel to $i$-coordinate axis and therefore 
it has the partial derivative~
$\frac{d\tilde f_k}{dx_i}(\bar x+x_ie_ i)$ for almost all $x_i\in \mathbb R$.  Thus, by Tonelli's theorem (see, for example, \cite[Theorem~13.8]{Sch2005})
the partial derivative $\frac{d\tilde f_k}{dx_i}(x)$ exists for almost all
 $x\in \mathbb R^n$, $i=1,2,...,n$. 

Again by Tonelli's theorem the intersection  
$$
\{\bar x+x_ie_i: x_i\in \mathbb R\}\cap A_k
$$
is measurable for almost all $\bar x\in\mathbb R^{n-1}$. 
Take  $\bar x\in \mathbb R^{n-1}$ such that  

\begin{enumerate}
\item[(1)] the intersection 
$\{\bar x+x_ie_i: x_i\in \mathbb R\}\cap A_k$  is measurable and has a positive Lebesgue measure;

\item[(2)] the restriction of $g$  to the line $\{\bar x+x_ie_i: x_i\in \mathbb R\}$ belongs to the class $L^1_{\loc}$.
\end{enumerate}

Now let  $t\in \mathbb R$ be a value such that

\begin{enumerate}
\item[(3)] $\bar x+te_i\in A_k$;

\item[(4)] $\bar x+te_i$ is a density point with respect to  $\{\bar x+x_ie_i: x_i\in \mathbb R\}\cap A_k$;

\item[(5)] there exists the partial derivative  
$$
\frac{\partial }{\partial x_i}\tilde f(\bar x+te_i) =\frac{d}{dt}\tilde f(\bar x+te_i);
$$ 

\item[(6)]  $\bar x+te_i$ is a Lebesgue point of the restriction $g\colon\{\bar x+x_ie_i: x_i\in \mathbb R\}\to \mathbb R$.
\end{enumerate}

Properties (1)--(2) are fulfilled for almost all $\bar x \in \mathbb R^{n-1}$ such 
that the intersection $\{\bar x + x_ie_i: x_i \in \mathbb R\} \cap A_k $
is not empty.
For fixed $\bar x \in \mathbb R^{n-1} $ properties (3)--(6) hold for almost
all $t$ such that $ \bar x + te_i \in A_k $.
Therefore, by Tonelli's theorem, properties (1)--(6) are satisfied for almost all $x \in A_k$.

Our immediate goal is to evaluate the derivative: 
\begin{equation}\label{min9}
\biggl|\frac{\partial\tilde f_k}{\partial x_i} \biggr|(x)\leq
\begin{cases}
 2g(x) \quad &\text{for almost all} \quad  x \in A_k, \ \ 
	i=1,2,\ldots,n,\\
	2k \quad &\text{for almost all} \quad  x \in \mathbb R^n\setminus A_k.
\end{cases}
\end{equation}
The second line is a consequence of \eqref{Lipfun} and the definition of partial derivative.

Let $x = \bar x + te_i \in A_k$ be  a point meeting  all the above-mentioned 
properties (1)--(6). In view of  \eqref{min3} for the function
$$
\mathbb R\ni\tau\to  h(\tau)=\tilde f(\bar x+ (t+\tau )e_i)=\tilde f(x+ \tau e_i)
$$ 
we have an estimate of the difference relation provided
$x+ \tau e_i\in A_k$:
\begin{equation}\label{min10}
\begin{aligned}
	\biggl|\frac{h(\tau)-h(0)}{\tau}\biggr|
	&=\biggl|\frac{\tilde f(x+ \tau e_i)-\tilde f(x)}{\tau}\biggr| \\
	&=\biggl|\frac{f(x+\tau e_i)-f( x)}{\tau}\biggr| \\
	&\leq g(x)+g(x+\tau e_i)\\
	&=2g(x)+g(x+\tau e_i)-g(x).
\end{aligned}
\end{equation}
  The last row of the relations \eqref{min10} shows that the estimate for the derivative
  $ h '(0) $ depends on the behavior of the difference
  $g(x+\tau e_i)-g(x)$. 
Since $x$ is the Lebesgue point of the restriction
$g\colon\{\bar x+x_ie_i: x_i\in \mathbb R\}\to \mathbb R$
 then
$$
\frac{1}{2\delta}\int\limits_{-\delta}^\delta|g(x+\tau e_i)-g( x)|\, d\tau = o(1)\quad\text{as  $\delta\to 0$}.
$$
We fix an arbitrary number $\varepsilon> 0$.
Using the Chebyshev inequality, we deduce
\begin{equation*}
	|\{\tau\in(-\delta,\delta): |g(x+\tau e_i)-g(x)|\geq \varepsilon\}|
	\leq
	\frac1\varepsilon\int\limits_{-\delta}^\delta|g(x+\tau e_i)-g(x)|\, d\tau = 
	\frac{o(\delta)}\varepsilon
\end{equation*}
as $\delta\to 0$.
Therefore, we obtain
\begin{equation}\label{min11}
	1\geq\frac{|\{\tau\in[-\delta,\delta]: |g(x+ \tau e_i)-g(x)|<\varepsilon\}|}
	{2\delta}
	\geq\frac{2\delta-\frac{o(\delta)}\varepsilon}{2\delta}\to1
\end{equation}
as $\delta\to 0$.

As soon as the point $x$ is a density point  with respect to the intersection 
$\{\bar x + x_ie_i: x_i \in \mathbb R \} \cap A_k$, we come to 
\begin{equation}\label{min12}
	\frac{|\{[x-\delta e_i,x+\delta e_i]\cap A_k\}|}{2\delta}
	\to1\quad\text{as $\delta\to 0$}.
\end{equation}

We introduce the notation
\begin{equation*}
T=\{\tau\in[-1,1]: |g( x+ \tau e_i)-g(x)|<\varepsilon\}.
\end{equation*}
By virtue of \eqref{min11}, the point $0$ is a density point with respect to the set $T$.
Similarly to the previous one, due to \eqref{min12}, 
the point $0$ is also a density point
with respect to the set
$$
	P=\{\tau\in[-1,1]: [x-\tau e_i, x+ \tau e_i]\cap A_k\}.
$$
From the definition of a density point, we conclude that $0$
is the density point of the  intersection 
$T \cap P$.
From here, \eqref{min11} and \eqref{min12}
we derive the relations
\begin{equation}\label{min13}
 \begin{aligned}
   \biggl|\frac{h(\tau)-h(0)}{\tau}\biggr|
   &=\biggl|\frac{\tilde f(x+ \tau e_i)-\tilde f(x)}{\tau}\biggr|
   \\
   &=
   2g(x)+g(x+\tau e_i)-g(x)
   \\
   &\leq
   2g(x)+\sup\limits_{\tau\in [-\delta,\delta]\cap(T \cap P)}|g(x+\tau e_i)-g( x)|
   \\
   &\leq
   2g(x)+ \varepsilon
  \end{aligned}
 \end{equation}
   for all points $ \tau \in [- \delta, \delta] \cap (T \cap P)$. 
   Passing to the limit in \eqref{min13} as $\tau \to0 $, $ \tau \in [-\delta, \delta] \cap (T \cap P)$,	
   we get 
   $$
  |h'(0)| = \biggl|\frac{\partial \tilde f_k}{\partial x_i}(x)\biggr|\leq
 2g(x)+ \varepsilon.
$$
Since here $ \varepsilon> 0$ is an arbitrary positive number, the inequality \eqref{min9}
is proved.

As long  as $ A_k \subset A_{k + 1} $, $ k = 1,2, \ldots$, and
$\Bigl|\mathbb R^n\setminus\bigcup \limits_{k=1}^{\infty}A_k\Bigr|=0$,
for almost all $ x \in \mathbb R ^ n $ there exist limits  
$$
\lim\limits_{k\to\infty} \tilde f_k(x)=f(x),
$$
and, for 
$i=1,2,\ldots,n$,
by \eqref{min10},
we define
$$
w_i(x):=\lim\limits_{k\to\infty}\frac{\partial \tilde f_k}{\partial x_i}(x)=
\begin{cases}
	\frac{\partial \tilde f_1}{\partial x_i}(x),
	\quad \text{if   $x\in A_1$},\\
	\frac{\partial \tilde f_l}{\partial x_i}(x),
	\quad \text{if  $x\in A_l\setminus A_{l-1}$,} \quad  l=2,3,\ldots.
\end{cases}
$$
Fixing  an arbitrary point $x_0\in A_k$ and taking into account the inequality $k\leq g(x)$ at $x\in \RR^n\setminus A_{k}$
we have the following estimates:  
$$|\tilde f_k(x)-\tilde f_k(x_0)|\leq 2k|x-x_0|\leq 2g(x)|x-x_0|$$  
at
$x\in \RR^n\setminus (A_{k}\cup S)$. 
Therefore 
\begin{equation}\label{min14}
	|\tilde f_k(x)|\leq
	\begin{cases}
		|f(x)|,
		\ &\text{if   $x\in A_k$},\\
		2g(x)|x-x_0|+|f(x_0)|,
		\  &\text{if   $x\in \mathbb R^n\setminus  (A_{k}\cup S)$}.
	\end{cases}
\end{equation}
Moreover
$|w_i(x)|\leq 2g(x)$ for almost all  $x\in \mathbb R^n$ by \eqref{min9} and the inequality
$k\leq g(x)$ in $x\in \mathbb R^n\setminus  A_{k}$.

Take an arbitrary test function $\varphi \in C_0^\infty(\RR^n)$.
Under hypotheses of theorem, the function $f$ is integrable  on
$ \operatorname{supp} \varphi $.
Since $ \tilde f_k $ has the first generalized derivatives, we have
$$
\int\limits_{\{x:\varphi(x)\ne0\}}\varphi(x)\frac{\partial \tilde f_k}{\partial x_i}(x)\,dx=
-\int\limits_{\{x:\varphi(x)\ne0\}}\tilde f_k(x)\frac{\partial\varphi}{\partial x_i}(x)\,dx.
$$
On the compact set $\{x:\varphi(x)\ne0\}$ the sequences $\tilde f_k(x)$ and $\frac{\partial \tilde f_k}{\partial x_i}(x)$ have the majorants 
$\max\limits_{\{x:\varphi(x)\ne0\}}(|f(x)|,2g(x)|x-x_0|+|f(x_0)|)$
 and $2g(x)$, respectively.
Therefore, by the Lebesgue dominated convergence theorem
we obtain
$$
\int\limits_{\{x:\varphi(x)\ne0\}}\varphi(x) w_i(x)\,dx=
-\int\limits_{\{x:\varphi(x)\ne0\}} f(x)\frac{\partial\varphi}{\partial x_i}(x)\,dx
$$
for all  $\varphi\in C_0^\infty(\RR^n)$.
Consequently,
the weak derivative
$w_i(x)$ 
equals the partial derivative
$ \frac{\partial f} {\partial x_i} (x)$, $ i = 1, \ldots, n$,
a.e.\ in $\RR^n$.
By virtue of \eqref{min9} and the lattice property, we have the estimate
$$
\norm{ \nabla f}_{X(\mathbb R^n)}\leq 2 \sqrt{n}\, \norm{g}_{X(\mathbb R^n)}.
$$
Therefore, 
$\nabla f\in X(\mathbb R^n)$.
Thus, it is proved that 
$f\in L^1 X(\mathbb R^n)$.
\end{proof}

Let us provide the following characterization of $L^1X(\Omega)$, that generalizes \cite[Theorem 1]{Haj1996}.

\begin{corollary}\label{cor:PD_W}
Let $\Omega\subseteq \RR^n$ be a domain, $X(\Omega) \subset L^1_{\loc} (\Omega)$ be a Banach space such that 
$X(\Omega)$ satisfies the lattice property
and the maximal operator $M\colon X(\Omega) \to X(\Omega)$ is bounded.
Then $f \in L^1 X (\Omega)$ if and only if there exists a non negative function $g\in X(\Omega)$ such that the inequality
\begin{equation*}
	|f(x)-f(y)|\leq |x-y| (g(x)+g(y))
\end{equation*}
holds for all $x$, $y \in \Omega \setminus S$ with $B(x,3|x-y|)\subset\Omega$,
where $S\subset\Omega$ is a set of measure zero.
\end{corollary}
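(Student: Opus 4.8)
The plan is to observe that the full argument of Theorem~\ref{thm:D} already proves this sharper statement, since the hypothesis $f\in X(\Omega)$ is never actually used in either implication: in Theorem~\ref{thm:D} it serves only as a bookkeeping term to pass from $\nabla f\in X(\Omega)$ to $f\in W^1X(\Omega)$. I would therefore rerun that proof and simply track what each half produces.

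For necessity, I would start from $f\in L^1X(\Omega)$, so that $\nabla f\in X(\Omega)$ and, by weak differentiability, $f\in L^1_{\loc}(\Omega)$. As in \eqref{t1}--\eqref{t3}, Proposition~\ref{pi} combined with the Hedberg chaining over dyadic annuli yields
\[
	|f(x)-f(y)|\le 3C\cdot 2^{n-1}\omega_n\,|x-y|\bigl(M_t(|\nabla f|)(x)+M_t(|\nabla f|)(y)\bigr)
\]
for $x,y\in\Omega\setminus S$ with $B(x,3|x-y|)\subset\Omega$. Setting $g=3C\cdot 2^{n-1}\omega_n\,M(|\nabla f|)$ and invoking the boundedness of $M\colon X(\Omega)\to X(\Omega)$ gives the required $g\in X(\Omega)$. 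This half uses only $\nabla f\in X(\Omega)$, never $f\in X(\Omega)$.

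For sufficiency, I would assume the pointwise inequality with some nonnegative $g\in X(\Omega)\subset L^1_{\loc}(\Omega)$. First note that the inequality forces $f\in L^1_{\loc}(\Omega)$: on each level set $A_k=\{x\in\Omega\setminus S:g(x)\le k\}$ the function $f$ is $2k$-Lipschitz, the Kirszbraun extensions $\tilde f_k$ converge a.e.\ to $f$, and \eqref{min14} bounds $|f|$ locally by $2g(x)|x-x_0|+|f(x_0)|$, which is locally integrable. Running the truncation and extension argument of Theorem~\ref{thm:D} verbatim, I would identify the weak partial derivatives $w_i=\partial f/\partial x_i$ via the dominated-convergence passage in the weak-derivative identity—where the compact support of the test function means only local integrability of $f$ is needed—and recover the pointwise bound $|w_i(x)|\le 2g(x)$ a.e., cf.\ \eqref{min9}. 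The lattice property then gives $\|\nabla f\|_{X(\Omega)}\le 2\sqrt n\,\|g\|_{X(\Omega)}<\infty$, so $\nabla f\in X(\Omega)$, that is, $f\in L^1X(\Omega)$.

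The only point requiring care—and the reason this is genuinely a corollary rather than a verbatim instance of Theorem~\ref{thm:D}—is checking that deleting $f\in X(\Omega)$ breaks nothing. Concretely, one must confirm that $f\in L^1_{\loc}(\Omega)$, which in the necessity direction is built into the definition of $L^1X(\Omega)$ through weak differentiability and in the sufficiency direction is supplied by the pointwise inequality together with $g\in L^1_{\loc}(\Omega)$, holds with no recourse to any global integrability of $f$. Since this is exactly what \eqref{min14} and the surrounding estimates furnish, I expect no additional obstacle.
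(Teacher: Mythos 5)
Your proposal is correct and coincides with the paper's (implicit) argument: the paper offers no separate proof of Corollary~\ref{cor:PD_W} precisely because the proof of Theorem~\ref{thm:D} as written already establishes it --- its necessity part starts from $f\in L^1X\cap L^1_{\loc}$ and its sufficiency part concludes $f\in L^1X$, with $f\in X(\Omega)$ used only to upgrade the conclusion to $W^1X(\Omega)$. Your additional check that $f\in L^1_{\loc}(\Omega)$ can be recovered from the pointwise inequality together with $g\in X(\Omega)\subset L^1_{\loc}(\Omega)$ is the right point to verify and is handled correctly.
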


\begin{remark}
The proof of Theorem \ref{thm:D} is based
on arguments of the paper \cite{Vod1996} where the similar assertion for Sobolev functions, defined on Carnot groups, was given. 
The provided details were given during lectures it the Mathematical Department of Novosibirsk State University without the arguments with approximate derivative
from \cite{Vod1996}.
See also \cite{Haj1996} for an independent  proof for Sobolev functions defined on Euclidean spaces.
\end{remark}

\section{The pointwise estimate for Banach function spaces}\label{sec:pointwise_estimates_BFS}
\setcounter{equation}{0}

Let us provide the following corollaries of Theorem~\ref{thm:D}
when the condition on the maximal operator can be explicitly stated.

\begin{corollary}\label{cor:BFS}
	Let $X(\RR^n)$ be a rearrangement invariant Banach function space with
 	the upper Boyd index $\beta_X<1$.
	A function $f$ belongs to $W^1X(\RR^n)$ 
	if and only if 
	$f\in X(\RR^n)$ and 
	there exists a non negative function $g\in X(\RR^n)$ 
	such that the inequality
	\begin{equation}\label{thmE:characterization}
		|f(x)-f(y)|\leq |x-y|(g(x)+g(y))
	\end{equation}
	holds for 
	$x$, $y \in \RR^n$ 
	a.e.
\end{corollary}

We refer the reader to \cite{BenSha1988} and \cite{PicKufJohFuc2013} for definitions and properties of different Banach function spaces.

\begin{corollary}\label{cor:BFS}
	Let $X(\RR^n)$ be a rearrangement invariant Banach function space with
 	the upper Boyd index $\beta_X<1$.
	A function $f$ belongs to $W^1X(\RR^n)$ 
	if and only if 
	$f\in X(\RR^n)$ and 
	there exists a non negative function $g\in X(\RR^n)$ 
	such that the inequality
	\begin{equation}\label{thmE:characterization}
		|f(x)-f(y)|\leq |x-y|(g(x)+g(y))
	\end{equation}
	holds for 
	$x$, $y \in \RR^n$ 
	a.e.
\end{corollary}

\begin{remark}\label{rem:boyd}
	If
	$X$ is a rearrangement invariant Banach function space then the maximal operator is bounded if and only if
 	the upper Boyd index $\beta_X<1$,
    see \cite[Chapter 3, Definition 5.12 and Theorem 5.17]{BenSha1988}.
    The formulas for calculating the Boyd indices of classical function spaces may be found in, for example, \cite{FioKrb1998}.
\end{remark}

For reader's convenience, we provide below explicit formulation for  Lorentz and Orlicz spaces, as well as Lebesgue spaces with variable exponent, since these spaces are of special interest in applications.

\begin{corollary}\label{cor:Lorentz}
	A function $f$ belongs to Sobolev--Lorentz space $W^1L^{p,q}(\RR^n)$ 
	with $p>1$ and $q \geq 1$
	if and only if 
	$f\in L^{p,q}(\RR^n)$ and 
	there exist a non negative function $g\in L^{p,q}(\RR^n)$ 
	such that the inequality~\eqref{thmE:characterization} is fulfilled 
	for $x$, $y \in \RR^n$ a.e.
\end{corollary}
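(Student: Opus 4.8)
The plan is to derive this statement as an immediate consequence of Corollary~\ref{cor:BFS}, so the entire task reduces to checking two facts about the Lorentz space $L^{p,q}(\RR^n)$: that it is a rearrangement invariant Banach function space, and that its upper Boyd index satisfies $\beta_X < 1$ under the hypotheses $p>1$, $q\ge 1$. Once both are in place, one applies Corollary~\ref{cor:BFS} with $X(\RR^n)=L^{p,q}(\RR^n)$ and reads off the characterization of $W^1 L^{p,q}(\RR^n)$ via the pointwise inequality \eqref{thmE:characterization}.

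First I would recall the structure of $L^{p,q}(\RR^n)$. It consists of measurable functions for which the functional built from the decreasing rearrangement $f^*$ is finite, namely the weighted $L^q$ quantity of $t^{1/p}f^*(t)$ against $dt/t$, with the obvious supremum reading when $q=\infty$. By construction this functional depends only on the distribution function of $f$, so it is rearrangement invariant, and it satisfies the lattice property automatically, since $|f|\le|g|$ forces $f^*\le g^*$ pointwise on $(0,\infty)$. The one point requiring care is that for $q>p$ the rearrangement functional is only a quasi-norm; however, for the range $p>1$ one replaces $f^*$ by the maximal function $f^{**}(t)=t^{-1}\int_0^t f^*(s)\,ds$ to obtain an equivalent genuine norm under which $L^{p,q}(\RR^n)$ is a Banach function space. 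This normability is classical for $1<p<\infty$ and $1\le q\le\infty$.

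The second step is the Boyd index computation. It is a standard fact, covered by the references cited in Remark~\ref{rem:boyd}, that both Boyd indices of $L^{p,q}(\RR^n)$ coincide and equal $1/p$, independently of the secondary parameter $q$. Hence the upper Boyd index is $\beta_X=1/p$, and the assumption $p>1$ yields $\beta_X=1/p<1$. By Remark~\ref{rem:boyd} this is precisely the condition guaranteeing boundedness of the Hardy--Littlewood maximal operator on $L^{p,q}(\RR^n)$, so both hypotheses of Corollary~\ref{cor:BFS} are met.

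I expect no genuine obstacle in this argument: the decisive analytic content — the pointwise description itself — is already carried by Theorem~\ref{thm:D} and Corollary~\ref{cor:BFS}, and what remains is purely structural verification. The only delicate point is the normability issue for $q>p$, which is resolved by the $f^{**}$ substitution and, importantly, does not alter the value of the Boyd index; thus the equivalence of norms is harmless for the application. With $L^{p,q}(\RR^n)$ recognized as a rearrangement invariant Banach function space with $\beta_X<1$, the claimed equivalence for $W^1 L^{p,q}(\RR^n)$ follows at once.
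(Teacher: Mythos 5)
Your argument is correct and is exactly the route the paper intends: Corollary~\ref{cor:Lorentz} is stated as a direct specialization of Corollary~\ref{cor:BFS}, with the only work being the verification that $L^{p,q}(\RR^n)$ is a rearrangement invariant Banach function space (via the $f^{**}$ renorming for $q>p$, valid since $p>1$) and that its upper Boyd index equals $1/p<1$, as indicated in Remark~\ref{rem:boyd} and the cited reference \cite[Chapter 4.4]{BenSha1988}. No gaps; your handling of the normability issue and its non-effect on the Boyd index is the right level of care.
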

\noindent Properties of Lorentz spaces can be found in \cite[Chapter 4.4]{BenSha1988}.

\begin{corollary}\label{cor:Orlicz}
	Consider a Young function $A$ such that
	there exists a positive constant $c$, for which
    \begin{equation*}
		\int\limits_0^t\frac{A(s)}{s^2}\,ds\leq\frac{A(ct)}{t}
 	\end{equation*}
	holds for all $t>0$.
	Then $f$ belongs to Sobolev--Orlicz space $W^1L^{A}(\RR^n)$ 
	if and only if 
	$f\in L^{A}(\RR^n)$ and 
	there exists a non negative function $g\in L^{A}(\RR^n)$ 
	such that the inequality~\eqref{thmE:characterization} is fulfilled 
	for $x$, $y \in \RR^n$ a.e.
\end{corollary}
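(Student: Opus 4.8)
The plan is to read Corollary~\ref{cor:Orlicz} as the special case of Corollary~\ref{cor:BFS} (hence of Theorem~\ref{thm:D}) corresponding to the choice $X(\RR^n)=L^A(\RR^n)$, the Orlicz space generated by the Young function $A$ and equipped with the Luxemburg norm. Two facts must be assembled. First, $L^A(\RR^n)$ is a rearrangement invariant Banach function space, and since the Luxemburg norm is monotone with respect to the modular, it automatically enjoys the lattice property; this is classical and requires no new work. Consequently, by Remark~\ref{rem:boyd}, the only hypothesis of Corollary~\ref{cor:BFS} still to be verified is that the upper Boyd index satisfies $\beta_{L^A}<1$, equivalently that the Hardy--Littlewood maximal operator $M$ is bounded on $L^A(\RR^n)$. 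The entire argument therefore reduces to showing that the displayed integral condition on $A$ forces the boundedness of $M$.

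To make this reduction effective I would pass from $M$ to the Hardy averaging operator by means of the classical pointwise rearrangement bound $(Mf)^*(t)\lesssim f^{**}(t)=\frac1t\int_0^t f^*(s)\,ds$, valid on $\RR^n$. Since $L^A$ is rearrangement invariant, this identifies the boundedness of $M$ on $L^A(\RR^n)$ with the boundedness of the Hardy operator
\[
	Ph(t)=\frac1t\int_0^t h(s)\,ds
\]
on the Luxemburg representation of $L^A$ over the half-line $(0,\infty)$. The advantage is that $P$ is an explicit integral operator acting on a one-dimensional model, so its mapping properties can be tested directly against the modular of $A$.

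The heart of the matter --- and the step I expect to be the main obstacle --- is a modular estimate turning the hypothesis
\[
	\int_0^t\frac{A(s)}{s^2}\,ds\leq\frac{A(ct)}{t},\qquad t>0,
\]
into a bound of the form $\int_0^\infty A\bigl(Ph(t)\bigr)\,dt\le C\int_0^\infty A\bigl(c'h(t)\bigr)\,dt$ for suitable constants $C$, $c'$; such an inequality yields at once the boundedness of $P$, and hence of $M$, on $L^A$. The condition enters exactly at the point where one must compare the average $Ph(t)=\frac1t\int_0^t h$ with $h$ itself: convexity of $A$ alone is too weak, and it is the prescribed growth of $A$ encoded in $\int_0^t A(s)/s^2\,ds\le A(ct)/t$ that compensates for the averaging and closes the estimate. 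Because the integral runs from $0$, a single condition simultaneously governs the behaviour of $A$ near the origin and near infinity, which is precisely what the infinite-measure space $\RR^n$ requires.

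Finally, I would remark that this equivalence between the displayed integral condition and the boundedness of $M$ on $L^A$ is the classical characterization going back to Gallardo and sharpened by Cianchi; one may alternatively invoke it directly and bypass the modular computation, after which Corollary~\ref{cor:Orlicz} is immediate from Corollary~\ref{cor:BFS}.
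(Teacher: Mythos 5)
Your reduction coincides with the paper's: Corollary~\ref{cor:Orlicz} is presented there as an immediate instance of Corollary~\ref{cor:BFS} (hence of Theorem~\ref{thm:D}), with the boundedness of $M$ on $L^A(\RR^n)$ under the displayed integral condition simply cited from the literature on maximal operators in Orlicz spaces --- i.e.\ the ``alternative'' you offer in your final paragraph is in fact the paper's entire argument. Your sketched route via $(Mf)^*\lesssim f^{**}$ and a modular bound for the Hardy averaging operator is a sound outline of how that cited characterization is proved, but since you acknowledge leaving its key modular inequality unestablished, the citation fallback is what actually closes the proof, exactly as in the paper.
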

\noindent For the research on the maximal operator in Orlicz spaces the reader is referred to \cite{Kit1997} and \cite[Theorem 3.3]{Mus2019}.

\begin{remark}
	In \cite[Theorem 1.2]{Tuo2007}
	the characterization 
	in spirit of \cite{Haj2003} is given 
	for the Orlicz--Sobolev space $W^1L^{A}(\RR^n)$ with the Young function $A$ and a complimentary $A^*$ satisfying the $\Delta_2$-condition.
	The right hand side of \eqref{thmE:characterization} is expressed in terms of the Young function:
	\begin{equation*}
		|f(x)-f(y)|\leq C |x-y|(A^{-1}(M_{\sigma|x-y|}A(g)(x))+A^{-1}(M_{\sigma|x-y|}A(g)(y))),
	\end{equation*}
	for some $C>0$ and $\sigma \geq 1$.
	Here $A^{-1}$ is a generalized inverse of $A$.
\end{remark}

\begin{corollary}\label{cor:variable_exponent}
	Let $p\colon \RR^n \to [1,\infty]$ be a measurable function with $p^{-}: = \operatorname{ess\,inf}_{y\in\RR^n}p(y)>1$ and $\frac{1}{p}$ is globally log-H\"older continuous%
	\footnote{
		$\alpha\colon\RR^n\to\RR$ is globally log-H\"older continuous if there exist $c_1$, $c_2>0$ and $\alpha_{\infty} \in \RR$ s.t.\
		$$
			|\alpha(x) - \alpha(y)| \leq \frac{c_1}{\operatorname{log}\left(e + \frac{1}{|x-y|}\right)} \text{ for all } x, y \in \RR^n, \: \text{ and }\: 
			|\alpha(x) - \alpha_\infty| \leq \frac{c_2}{\operatorname{log}(e + |x|)} \text{ for all } x \in \RR^n.
		$$
	}.
	Then $f$ belongs to Sobolev space with variable exponent $W^{1,p(\cdot)}(\RR^n)$ 
	if and only if 
	$f\in L^{p(\cdot)}(\RR^n)$ and 
	there exist a non negative function $g\in L^{p(\cdot)}(\RR^n)$ 
	such that the inequality~\eqref{thmE:characterization} 
	holds for 
	$x$, $y \in \RR^n$ 
	a.e.
\end{corollary}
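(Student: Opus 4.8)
The plan is to derive this corollary as a direct application of Theorem~\ref{thm:D} with $\Omega=\RR^n$ and $X(\RR^n)=L^{p(\cdot)}(\RR^n)$, the variable exponent Lebesgue space equipped with its Luxemburg norm. Thus the entire task reduces to verifying the three structural hypotheses of that theorem for $L^{p(\cdot)}(\RR^n)$: that it is a Banach space contained in $L^1_{\loc}(\RR^n)$, that it enjoys the lattice property, and that the Hardy--Littlewood maximal operator $M$ is bounded on it. Once these are in place, Theorem~\ref{thm:D} applies verbatim, and since $\Omega=\RR^n$ the geometric restriction $B(x,3|x-y|)\subset\Omega$ appearing in~\eqref{min3} is automatically satisfied, so the pointwise inequality~\eqref{thmE:characterization} holds for a.e.\ $x$, $y\in\RR^n$ exactly as claimed.

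First I would recall that $L^{p(\cdot)}(\RR^n)$ is a Banach space under the norm
$$
	\|f\|_{L^{p(\cdot)}(\RR^n)}=\inf\Bigl\{\lambda>0:\ \int\limits_{\RR^n}\Bigl(\tfrac{|f(x)|}{\lambda}\Bigr)^{p(x)}\,dx\le1\Bigr\},
$$
with the usual interpretation of the integrand on the set where $p=\infty$. The lattice property is then immediate from the definition: if $|f|\le|g|$ a.e., then for every $\lambda>0$ the modular of $f/\lambda$ does not exceed that of $g/\lambda$, so the set of admissible $\lambda$ for $g$ is contained in that for $f$, whence $\|f\|_{L^{p(\cdot)}}\le\|g\|_{L^{p(\cdot)}}$. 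The inclusion $L^{p(\cdot)}(\RR^n)\subset L^1_{\loc}(\RR^n)$ follows from the variable-exponent H\"older inequality: on any bounded measurable set $E$ one has $\int_E|f|\le C\,\|f\|_{L^{p(\cdot)}}\,\|\chi_E\|_{L^{p'(\cdot)}}<\infty$, using $p^->1$ so that the conjugate exponent is finite and $\chi_E\in L^{p'(\cdot)}$.

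The only substantive point, and the one carrying all the weight of the hypotheses on $p$, is the boundedness of $M$ on $L^{p(\cdot)}(\RR^n)$. This is precisely the maximal-function theorem for variable exponent spaces: under $p^->1$ together with the local and the decay log-H\"older continuity of $1/p$, the operator $M$ maps $L^{p(\cdot)}(\RR^n)$ boundedly into itself (Diening; and in the global formulation stated here, Cruz-Uribe--Fiorenza--Neugebauer, see also the monograph by Diening--Harjulehto--H\"ast\"o--R\r{u}\v{z}i\v{c}ka). I would simply invoke this result rather than reprove it. In this sense there is no genuine obstacle internal to the argument: the depth of the statement is entirely inherited from that maximal-function theorem, and once it is cited the corollary follows by the single application of Theorem~\ref{thm:D} described above.
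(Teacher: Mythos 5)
Your proposal is correct and follows exactly the route the paper intends: the corollary is a direct instance of Theorem~\ref{thm:D} with $X(\RR^n)=L^{p(\cdot)}(\RR^n)$, where the lattice property and the inclusion in $L^1_{\loc}$ are elementary and the boundedness of $M$ under $p^->1$ and global log-H\"older continuity of $1/p$ is quoted from the variable-exponent literature (the paper cites \cite[Section 4.3]{DieHarHasRuz2017} for precisely this). No gaps.
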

\noindent The theory of Lebesgue and Sobolev spaces with variable exponent and, in particular, conditions about the boundedness of maximal operator can be found in \cite[Section 4.3]{DieHarHasRuz2017}).


\section{Weighted grand Sobolev spaces}\label{sec:weighted_GS}
\setcounter{equation}{0}

In this section, we study weighted grand Lebesgue $L^{q)}_{a}(\Omega,w)$ and grand Sobolev spaces $W^{1,q)}_{a}(\Omega,w)$, defined on a bounded or unbounded domain $\Omega$.
In case of a bounded domain $\Omega$ a grand Lebesgue space $L^{q)}(\Omega)$ is a Banach function space (Proposition~\ref{prop:generalizedGL}) and the maximal operator is bounded (Proposition~\ref{HLGL}). 
The case of an unbounded domain, instead, does not fit the concept of Banach function spaces. 
Nevertheless, the maximal operator is bounded in $L^{q)}_{a}(\mathbb R^n,w)$ 
(Theorem~\ref{thm:E}) and, therefore, the pointwise description is possible (Corollary~\ref{cor:GS}).

\subsection{Grand Lebesgue spaces}\label{sec:grand_lebesgue}
\setcounter{equation}{0}

Let us start with a definition of grand Lebesgue spaces.

\begin{definition}
Let $\Omega \subset \mathbb R^n$ be a bounded domain for $n \geq 2$. 
For $1<q <\infty $, the {\it grand Lebesgue space} $L^{q)}(\Omega)$ consists of all measurable functions $g$,
$g \in \bigcap\limits_{1< p<q} L^p(\Omega)$, 
such that
\begin{equation}\label{glsbd}
	\|g\|_{L^{q)}(\Omega)}:=\sup\limits_{0<\varepsilon< q-1}
	\Biggl(\varepsilon \int\limits_\Omega
	|g(x)|^{q-\varepsilon}\,dx\Biggr)^\frac1{q-\varepsilon}
	< \infty.
\end{equation}
\end{definition}

\begin{remark}
	In some cases it is more convenient to consider a multiplier $\frac{1}{|\Omega|}$ in front of the integral in \eqref{glsbd}. 
\end{remark}

\noindent The spaces $L^{q)}(\Omega)$ are rearrangement invariant Banach function spaces and the following continuous embeddings hold:
$$L^q(\Omega) \hookrightarrow L^{q)}(\Omega) \hookrightarrow L^{q-\varepsilon}(\Omega) \, \, \,\text{for} ~0<\varepsilon < q-1. $$
For any given $1<q<\infty$, the inclusion $L^q(\Omega) \subset L^{q)}(\Omega)$ is strict and, moreover, the spaces $L^{q)}(\Omega)$ are not reflexive \cite{Fio2000}.
A grand Lebesgue space $L^{n)}(\Omega)$, where $\Omega$ is a bounded domain in $\RR^n$, was first introduced in \cite{IwaSbo1992}, to study the question of the integrability of the Jacobian of an orientation-preserving mapping belonging to the Sobolev space 
$W^{1,n}(\Omega)$.  
Grand Lebesgue spaces have been thoroughly studied in the one-dimensional case when $\Omega =I= (0,1)$, 
some recent results can be found in \cite{AnaFio2015,CapForGio2013,Fio2000,FioGupJai2008,JaiSinSin2017-1,JaiSinSin2017-2}.

To define and deal with grand Lebesgue space $L^{q)}(\Omega)$ in case of unbounded domains $\Omega \subset \mathbb R^n$, i.e.\  $|\Omega| \le \infty$, one needs to introduce a class of weights.

\begin{definition}
	A \textit{weight} $w$, i.e.\ a measurable, positive, and finite almost everywhere (\textit{a.e.}) function, is said to be in the {\it Muckenhoupt class}  $A_q(\mathbb R^n )$, $1 < q< \infty$, if 
	$$[w]_{A_q(\mathbb R^n )}:= \sup_{Q\subset\RR^n}\left( \fint\limits_Q w(x) dx \right) \left( \fint\limits_Q w(x)^{-\frac1{q-1}}\,dx\right)^{q-1} <\infty, $$
	where $Q \subset \mathbb R^n$ is a cube with edges parallel to the coordinate axes.
\end{definition}

The class $A_q(\mathbb R^n ) $, $1 < q< \infty$, possesses the following properties which are consequences of the H\"older inequality and the reverse H\"older inequality, see for instance \cite[Theorem IV]{CoiFef1974}.

\begin{proposition}\label{Aq_prop}
	Let $1 < q< \infty$.
	\begin{itemize}
	\item [(i)] If $q < p$, then $A_q(\mathbb R^n ) \subseteq A_p(\mathbb R^n ) $ and $[w]_{A_p(\mathbb R^n )} \le [w]_{A_q(\mathbb R^n )}$$;$

	\item [(ii)] $[w]_{A_q(\mathbb R^n )} \ge 1$$;$

	\item [(iii)]  If $w \in A_q(\mathbb R^n )$ and $0 \leq \alpha \leq 1$, then $w^{\alpha} \in A_{q}(\mathbb R^n )$ and $[w^{\alpha}]_{A_q(\mathbb R^n )} \le [w]^{\alpha}_{A_q(\mathbb R^n )}$$;$

	\item [(iv)] If $w \in A_q(\mathbb R^n )$, then there exists $0 < \sigma < q-1$ such that $w \in A_{q- \sigma} (\mathbb R^n )$$;$

	\item [(v)] If $w \in A_q(\mathbb R^n )$, then there exists $\alpha > 1$ such that $w^{\alpha} \in A_{q}(\mathbb R^n )$.
	\end{itemize}
\end{proposition}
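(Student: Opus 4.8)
The plan is to treat parts (i)--(iii) as direct consequences of Jensen's inequality (equivalently, H\"older's inequality and the power-mean inequality), and to base parts (iv) and (v) on the reverse H\"older inequality, which is the only genuinely nontrivial ingredient and the step I expect to be the main obstacle.

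For (ii) I would factor $1 = w^{1/q}\cdot w^{-1/q}$ and apply H\"older's inequality on each cube $Q$ with exponents $q$ and $q/(q-1)$, obtaining $1 \le \bigl(\fint_Q w\bigr)^{1/q}\bigl(\fint_Q w^{-1/(q-1)}\bigr)^{(q-1)/q}$; raising to the power $q$ and taking the supremum over $Q$ gives $[w]_{A_q(\RR^n)}\ge 1$. For (i) the key observation is that $q<p$ forces $\tfrac{1}{p-1}<\tfrac{1}{q-1}$, so the power-mean inequality applied to $w^{-1}$ yields $\bigl(\fint_Q w^{-1/(p-1)}\bigr)^{p-1}\le\bigl(\fint_Q w^{-1/(q-1)}\bigr)^{q-1}$; multiplying by the common factor $\fint_Q w$ and taking suprema gives $[w]_{A_p(\RR^n)}\le[w]_{A_q(\RR^n)}$, hence $A_q(\RR^n)\subseteq A_p(\RR^n)$. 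For (iii) the concavity of $t\mapsto t^\alpha$ for $0\le\alpha\le1$ gives, via Jensen, both $\fint_Q w^\alpha\le\bigl(\fint_Q w\bigr)^\alpha$ and $\fint_Q w^{-\alpha/(q-1)}\le\bigl(\fint_Q w^{-1/(q-1)}\bigr)^\alpha$; multiplying the first by the $(q-1)$-th power of the second factors the $A_q$ functional as a single power, giving $[w^\alpha]_{A_q(\RR^n)}\le[w]_{A_q(\RR^n)}^\alpha$.

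For (iv) and (v) the essential tool is the reverse H\"older inequality: if $w\in A_q(\RR^n)$ then there exist $C>0$ and $\varepsilon>0$ such that $\bigl(\fint_Q w^{1+\varepsilon}\bigr)^{1/(1+\varepsilon)}\le C\fint_Q w$ for every cube $Q$, and the same holds for the conjugate weight $w^{-1/(q-1)}$, which lies in the conjugate Muckenhoupt class. I would either cite this from \cite{CoiFef1974} or reprove it through a Calder\'on--Zygmund stopping-time decomposition; this is where the real work lies. For (iv) I apply the reverse H\"older inequality to $w^{-1/(q-1)}$, note that $(w^{-1/(q-1)})^{1+\varepsilon}=w^{-(1+\varepsilon)/(q-1)}$, and set $\sigma:=(q-1)\tfrac{\varepsilon}{1+\varepsilon}$, so that $\tfrac{1}{q-\sigma-1}=\tfrac{1+\varepsilon}{q-1}$ and $0<\sigma<q-1$. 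The estimate then bounds $\bigl(\fint_Q w^{-1/(q-\sigma-1)}\bigr)^{q-\sigma-1}$ by a constant multiple of $\bigl(\fint_Q w^{-1/(q-1)}\bigr)^{q-1}$, and multiplying by $\fint_Q w$ yields $[w]_{A_{q-\sigma}(\RR^n)}\le C^{q-1}[w]_{A_q(\RR^n)}<\infty$.

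Finally, for (v) I would apply the reverse H\"older inequality to both $w$ and $w^{-1/(q-1)}$ and choose a common exponent $\alpha=1+\eta>1$ with $\eta$ smaller than both reverse-H\"older exponents. The power-mean inequality then bounds $\fint_Q w^\alpha$ and $\fint_Q w^{-\alpha/(q-1)}$ by the $\alpha$-th powers of $\fint_Q w$ and $\fint_Q w^{-1/(q-1)}$ respectively; multiplying these estimates factors the $A_q$ functional of $w^\alpha$ as $[w]_{A_q(\RR^n)}^\alpha$ up to a constant, so $w^\alpha\in A_q(\RR^n)$.
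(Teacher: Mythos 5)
Your proposal is correct and follows exactly the route the paper indicates: the paper gives no proof of Proposition~\ref{Aq_prop}, stating only that the properties ``are consequences of the H\"older inequality and the reverse H\"older inequality'' with a citation to \cite[Theorem IV]{CoiFef1974}, and your argument fills in precisely those details (H\"older/Jensen for (i)--(iii), reverse H\"older for the dual weight $w^{-1/(q-1)}$ in (iv) and for both $w$ and $w^{-1/(q-1)}$ in (v)). The only slight imprecision is in (v), where the bound $\fint_Q w^\alpha \le C\bigl(\fint_Q w\bigr)^\alpha$ comes from the reverse H\"older inequality itself, the power-mean inequality serving only to pass from the reverse-H\"older exponents down to the common exponent $\alpha$; the mathematics is nonetheless sound.
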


Recall that, if $w$ is a weight, the \textit{weighted Lebesgue space} 
$L^q(\Omega,w)$ and \textit{weighted Sobolev space} $W^{1,q}(\Omega,w)$ are defined with norms 
$$
	\|f\|_{L^q(\Omega,w)}:=\left( \int\limits_\Omega |f(x)|^q w(x) \, dx \right)^{1/q} 
	< \infty
$$
and
$$
	\|f\|_{W^{1,q}(\Omega,w)}:=\|f\|_{L^q(\Omega,w)} + \|\nabla f\|_{L^q(\Omega,w)} < \infty.
$$
respectively.

Now we are ready to define a generalized version of grand Lebesgue spaces.

\begin{definition}\label{def:generalizedGL}
	Let
	$1<q<\infty$
	and
	$w$, $a$ 
	be weights such that
	$ wa^\varepsilon \in L^{1}_{\loc}(\Omega)$, 
	for all 
	$\varepsilon \in (0, q -1)$.
	The {\it generalized grand Lebesgue space} 
	$L^{q)}_{a}(\Omega,w)$
	consists of all measurable 
	functions 
	$g$ 
	defined on 
	$\Omega$ 
	such that
	$$ \|g\|_{L^{q)}_{a}(\Omega,w)} :=
	\displaystyle \sup _{0<\varepsilon< q-1}
	\left( \varepsilon \int\limits_\Omega |g(x)|^{q-\varepsilon}w(x)a^\varepsilon(x)\,dx \right)^\frac{1}{q-\varepsilon}
	< \infty. $$ 
\end{definition}

\begin{remark}
	The space $L^{q)}(\Omega)$ over the unbounded domain $\Omega$ was introduced and developed
	in \cite{SamUma2011,Uma2014}.
	The weight function $a$ in the Definition~\ref{def:generalizedGL} of grand Lebesgue space
	$L^{q)}_{a}(\Omega,w)$
	is called the \textit{grandisator}. 
	It is introduced for the close control of the behavior of 
	$g\in L^{q)}_{a}(\Omega,w)$ at infinity.
\end{remark}

To work with weights in the context of grand spaces the following lemma (see \cite[Lemma 5]{Uma2014}) appears to be useful.

\begin{lemma}\label{lemma5}
	If $w \in A_q(\mathbb R^n )$ and $a^\delta \in A_q(\mathbb R^n)$ for some $\delta > 0$, then there exists $0< \varepsilon < \delta$ such that $w a^{\varepsilon} \in A_{q-\varepsilon}(\mathbb R^n )$.
\end{lemma}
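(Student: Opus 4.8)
The plan is to exploit two tools from the theory of Muckenhoupt weights: the \emph{openness} of the $A_q$ condition (Proposition~\ref{Aq_prop}(iv), the self-improving exponent), and the quantitative stability of the $A_q$ characteristic under small perturbations of both the weight and the exponent. The statement to prove is that a small power $wa^\varepsilon$, tested against the slightly smaller exponent $q-\varepsilon$, still lies in a Muckenhoupt class; so I want to produce a single small $\varepsilon$ that simultaneously controls three competing effects: lowering the exponent from $q$ to $q-\varepsilon$, multiplying $w$ by the factor $a^\varepsilon$, and keeping the dual exponent integrability intact.

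First I would record the hypotheses in usable form. Since $w\in A_q(\RR^n)$, by Proposition~\ref{Aq_prop}(iv) there is a $\sigma_0\in(0,q-1)$ with $w\in A_{q-\sigma_0}(\RR^n)$; this gives room to decrease the exponent. Since $a^\delta\in A_q(\RR^n)$, Proposition~\ref{Aq_prop}(iii) shows $a^{\alpha\delta}\in A_q(\RR^n)$ with controlled characteristic for every $\alpha\in[0,1]$; in particular $a^\varepsilon\in A_q(\RR^n)$ whenever $0<\varepsilon\le\delta$, and by Proposition~\ref{Aq_prop}(i) also $a^\varepsilon\in A_{q-\sigma_0}(\RR^n)$ after possibly shrinking the target exponent. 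The key structural fact I would invoke is that for a fixed exponent $p>1$, the class $A_p$ is closed under products of the form $w_1^{\theta}w_2^{1-\theta}$ with comparable factors, and more usefully that $w\,a^\varepsilon$ can be written as an $A_p$ weight by an interpolation/H\"older estimate on the $A_p$ integrals: testing $\fint_Q w a^\varepsilon$ and $\fint_Q (wa^\varepsilon)^{-1/(p-1)}$ against the known bounds for $w$ and $a^\varepsilon$ separately via H\"older's inequality with a splitting exponent tending to $1$ as $\varepsilon\to 0$.

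The heart of the argument, and the step I expect to be the main obstacle, is the \emph{uniformity as $\varepsilon\to 0$} of these estimates. I would fix the target exponent $p=q-\varepsilon$ and estimate
\begin{equation*}
	[wa^\varepsilon]_{A_{q-\varepsilon}(\RR^n)}
	=\sup_{Q}\left(\fint_Q wa^\varepsilon\right)
	\left(\fint_Q (wa^\varepsilon)^{-\frac{1}{q-\varepsilon-1}}\right)^{q-\varepsilon-1},
\end{equation*}
applying H\"older's inequality inside each average to separate $w$ from $a^\varepsilon$. The factor $a^\varepsilon\to 1$ in an $A_q$-quantitative sense as $\varepsilon\to 0$ by Proposition~\ref{Aq_prop}(iii), while the exponent shift $q\mapsto q-\varepsilon$ is absorbed using that $w\in A_{q-\sigma_0}$ for the fixed margin $\sigma_0$. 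The delicate point is choosing the H\"older splitting exponents as functions of $\varepsilon$ so that every resulting integral remains finite and uniformly bounded; this forces the relation $\varepsilon<\delta$ together with $\varepsilon<\sigma_0$, and one checks that for all sufficiently small such $\varepsilon$ the product of the three perturbation factors stays finite, so $[wa^\varepsilon]_{A_{q-\varepsilon}}<\infty$.

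Finally I would assemble the pieces: set $\varepsilon_0=\min\{\delta,\sigma_0\}$ and verify that the combined bound holds for some $\varepsilon\in(0,\varepsilon_0)$, which yields $wa^\varepsilon\in A_{q-\varepsilon}(\RR^n)$ and completes the proof. The only real work is bookkeeping the H\"older exponents and confirming that the blow-up of the dual integrability exponent $\tfrac{1}{q-\varepsilon-1}$ as $\varepsilon$ grows toward $q-1$ is harmless because we take $\varepsilon$ small; near $\varepsilon=0$ everything degenerates continuously to the known bounds for $w\in A_q$, so no new obstruction appears there.
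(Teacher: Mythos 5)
The paper does not actually prove Lemma~\ref{lemma5}; it imports it verbatim from \cite[Lemma 5]{Uma2014}, so your proposal has to stand on its own. As a strategy outline it points at the right circle of ideas (openness of the $A_q$ condition, powers of weights, H\"older splitting of the two averages in the $A_{q-\varepsilon}$ characteristic), but as written it has two genuine problems. First, a directional error: you claim that from $a^\varepsilon\in A_q(\RR^n)$ ``by Proposition~\ref{Aq_prop}(i) also $a^\varepsilon\in A_{q-\sigma_0}(\RR^n)$.'' Proposition~\ref{Aq_prop}(i) gives the inclusion $A_{q-\sigma_0}\subseteq A_q$, i.e.\ it only lets you \emph{raise} the exponent; to lower it you must apply the self-improvement property (iv) to $a^\delta$ itself, obtaining some $\sigma_1>0$ with $a^\delta\in A_{q-\sigma_1}(\RR^n)$, and then restrict to $\varepsilon<\min\{\sigma_0,\sigma_1\}$. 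Second, the entire quantitative core is deferred to ``one checks that \dots the product of the three perturbation factors stays finite,'' and this is precisely where an essential ingredient is missing: when you split $\fint_Q wa^\varepsilon$ by H\"older you produce a factor of the form $\bigl(\fint_Q w^{r}\bigr)^{1/r}$ with $r>1$, and neither $w\in A_q$ nor $w\in A_{q-\sigma_0}$ controls this by $\fint_Q w$. You need Proposition~\ref{Aq_prop}(v) (equivalently, the reverse H\"older inequality), which supplies some $\beta>1$ with $w^{\beta}\in A_q(\RR^n)$; you never invoke it. Also note that ``$[a^\varepsilon]_{A_q}\to 1$'' by itself proves nothing about $wa^\varepsilon$, since $A_p$ is not closed under pointwise products of weights.

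The fix is to make the multiplicative structure explicit rather than perturbative. Take $\beta>1$ with $w^{\beta}\in A_q(\RR^n)$ by (v), set $\theta=1-1/\beta\in(0,1)$, and use (iv) to find $\sigma>0$ with both $w^{\beta}\in A_{q-\sigma}(\RR^n)$ and $a^{\delta}\in A_{q-\sigma}(\RR^n)$. For $0<\varepsilon<\min\{\sigma,\theta\delta\}$ one has $a^{\varepsilon/\theta}=(a^{\delta})^{\varepsilon/(\theta\delta)}\in A_{q-\varepsilon}(\RR^n)$ by (iii) and (i), and likewise $w^{\beta}\in A_{q-\varepsilon}(\RR^n)$. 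Then
\begin{equation*}
	w\,a^{\varepsilon}=\bigl(w^{\beta}\bigr)^{1-\theta}\bigl(a^{\varepsilon/\theta}\bigr)^{\theta},
\end{equation*}
and the log-convexity of the $A_{q-\varepsilon}$ class (the two-fold H\"older estimate on both averages, which you allude to but do not carry out) yields $[wa^{\varepsilon}]_{A_{q-\varepsilon}}\le [w^{\beta}]_{A_{q-\varepsilon}}^{1-\theta}[a^{\varepsilon/\theta}]_{A_{q-\varepsilon}}^{\theta}<\infty$. Until the reverse H\"older input and this factorization are supplied, the proposal is an outline rather than a proof.
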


\noindent Note that if $\delta \geq q-1$, then it may happen that $q-\varepsilon <1$. 
In this case one should consider $\alpha < \frac{q-1}{\delta} < 1$ and $\delta_0 = \alpha \delta < q-1$. Then by Proposition~\ref{Aq_prop}(iii) we can apply Lemma~\ref{lemma5} to $a^{\delta_0} \in A_q(\RR^n)$.

It has been proved in \cite[Lemma 3]{Uma2014} that  the following chain of embeddings holds 
\begin{equation} \label{12}
	L^q(\Omega,w) \hookrightarrow L^{q)}_{a}(\Omega,w)
	\hookrightarrow L^{q-\varepsilon}(\Omega,wa^\varepsilon), \quad
	0 < \varepsilon < q- 1,
\end{equation}
if and only if $a\in L^q(\Omega,w)$. 

Some of the properties possessed by the space $L^{q)}_{a} (\Omega,w)$ are proved below:

\begin{proposition} \label{prop:generalizedGL}
	Let 
	$ w a^\varepsilon \in L^{1}_{\loc} (\Omega)$ for all $\varepsilon \in (0, q-1)$.
	Then
	\begin{itemize}

		\item[$($i$)$] if $a \in L^q(\Omega,w)$, then the generalized grand Lebesgue space  $L^{q)}_{a} (\Omega,w)$ is a Banach space$;$

		\item[$($ii$)$]  if $|f| \le |g|$ a.e.\ on $\Omega$ then $\|f\|_{L^{q)}_{a}(\Omega,w)} \le \|g\|_{L^{q)}_{a}(\Omega,w)}$$;$

		\item[$($iii$)$]  if $0\le f_n \nearrow f$ a.e.\ in $\Omega$ then $\|f_n\|_{L^{q)}_{a}(\Omega,w)} \nearrow \|f\|_{L^{q)}_{a}(\Omega,w)}$$;$

		\item[$($iv$)$] if $E\subset \Omega$ is a measurable bounded set and  $a\in L^q(\Omega,w)$, then $\chi_E \in {L^{q)}_{a} (\Omega,w)}$$;$

		\item[$($v$)$] if $w \in A_q(\Omega)$ and $a^\delta \in A_q(\Omega)$ for some $\delta > 0$,
		then $L^{q)}_{a} (\Omega,w) \subset L^1_{\loc}(\Omega)$,
		and for a measurable bounded set $E\subset \Omega$, it holds
		\begin{equation}\label{v}
			\int_E |f(x)| \,dx \leq C \|f\|_{L^{q)}_{a}(\Omega,w)},
		\end{equation}
		where a constant $C$ does not depend on $f$.
	\end{itemize}
\end{proposition}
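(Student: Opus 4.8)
The plan is to handle the five parts essentially independently, but to establish the lattice and Fatou properties first, because completeness in (i) will rest on them. For (ii) the lattice property is immediate: if $|f|\le|g|$ a.e., then $|f|^{q-\varepsilon}\le|g|^{q-\varepsilon}$ a.e.\ for every $\varepsilon\in(0,q-1)$ (as $q-\varepsilon>0$), so the inner integral for $f$ is dominated by that for $g$ for each $\varepsilon$, and taking the supremum over $\varepsilon$ gives $\|f\|_{L^{q)}_{a}(\Omega,w)}\le\|g\|_{L^{q)}_{a}(\Omega,w)}$. For (iii), fix $\varepsilon$; since $0\le f_n\nearrow f$ forces $f_n^{q-\varepsilon}wa^\varepsilon\nearrow f^{q-\varepsilon}wa^\varepsilon$ a.e., the monotone convergence theorem yields $\Phi_n(\varepsilon):=(\varepsilon\int_\Omega f_n^{q-\varepsilon}wa^\varepsilon\,dx)^{1/(q-\varepsilon)}\nearrow\Phi(\varepsilon)$ for each $\varepsilon$. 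As $\|f_n\|_{L^{q)}_{a}(\Omega,w)}=\sup_\varepsilon\Phi_n(\varepsilon)$ is nondecreasing in $n$ by (ii), commuting the two suprema gives $\lim_n\|f_n\|_{L^{q)}_{a}(\Omega,w)}=\sup_n\sup_\varepsilon\Phi_n(\varepsilon)=\sup_\varepsilon\sup_n\Phi_n(\varepsilon)=\sup_\varepsilon\Phi(\varepsilon)=\|f\|_{L^{q)}_{a}(\Omega,w)}$.

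For (i) I would first verify the norm axioms. Definiteness holds since $\|g\|_{L^{q)}_{a}(\Omega,w)}=0$ forces $\int_\Omega|g|^{q-\varepsilon}wa^\varepsilon\,dx=0$ for every $\varepsilon$ and $wa^\varepsilon>0$ a.e.; homogeneity holds because $|\lambda|^{q-\varepsilon}$ pulls out of the inner integral and $(\,|\lambda|^{q-\varepsilon})^{1/(q-\varepsilon)}=|\lambda|$ uniformly in $\varepsilon$; the triangle inequality holds because for each fixed $\varepsilon\in(0,q-1)$ one has $q-\varepsilon>1$, so the inner expression is a constant multiple of the genuine norm of $L^{q-\varepsilon}(\Omega,wa^\varepsilon)$ (Minkowski applies), and a supremum of seminorms is subadditive. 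Completeness is the substantive point, and I would prove it via the Riesz--Fischer criterion, i.e.\ by showing every absolutely convergent series converges. Given $\sum_k\|g_k\|_{L^{q)}_{a}(\Omega,w)}<\infty$, set $s_N=\sum_{k\le N}|g_k|\nearrow s=\sum_k|g_k|$; the triangle inequality gives $\|s_N\|_{L^{q)}_{a}(\Omega,w)}\le\sum_{k\le N}\|g_k\|_{L^{q)}_{a}(\Omega,w)}$, and the Fatou property (iii) then yields $\|s\|_{L^{q)}_{a}(\Omega,w)}=\lim_N\|s_N\|_{L^{q)}_{a}(\Omega,w)}<\infty$. Hence $s<\infty$ a.e., so $\sum_kg_k$ converges absolutely a.e.\ to some $g$, and applying (iii) to the tails $\sum_{k>N}|g_k|$ shows the partial sums converge to $g$ in norm. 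Finiteness a.e.\ of the elements, needed to make $L^{q)}_{a}(\Omega,w)$ a vector space, follows from the automatic embedding $L^{q)}_{a}(\Omega,w)\hookrightarrow L^{q-\varepsilon}(\Omega,wa^\varepsilon)$ contained in \eqref{12}.

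For (iv) the quickest route is the left embedding of \eqref{12}, which holds precisely because $a\in L^q(\Omega,w)$: it suffices to note that for a bounded set $E$ one has $\chi_E\in L^q(\Omega,w)$, i.e.\ $\int_Ew\,dx<\infty$ by local integrability of the weight, whence $\chi_E\in L^{q)}_{a}(\Omega,w)$. For (v) I would invoke Lemma~\ref{lemma5}: since $w\in A_q(\Omega)$ and $a^\delta\in A_q(\Omega)$, there is some $\varepsilon\in(0,\delta)$ with $q-\varepsilon>1$ (shrinking $\delta$ by Proposition~\ref{Aq_prop}(iii) if necessary, as in the remark following Lemma~\ref{lemma5}) such that $\sigma:=wa^\varepsilon\in A_{q-\varepsilon}(\Omega)$. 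Writing $p=q-\varepsilon>1$, membership $f\in L^{q)}_{a}(\Omega,w)$ gives $\int_\Omega|f|^{p}\sigma\,dx\le\varepsilon^{-1}\|f\|_{L^{q)}_{a}(\Omega,w)}^{p}$, while the $A_p$ condition guarantees that the dual weight $\sigma^{-1/(p-1)}$ is locally integrable. For bounded $E$, Hölder's inequality with exponents $p$ and $p'$ then gives $\int_E|f|\,dx\le(\int_E|f|^p\sigma\,dx)^{1/p}(\int_E\sigma^{-1/(p-1)}\,dx)^{1/p'}\le C\|f\|_{L^{q)}_{a}(\Omega,w)}$ with $C$ independent of $f$, which yields both \eqref{v} and $L^{q)}_{a}(\Omega,w)\subset L^1_{\loc}(\Omega)$.

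The main obstacle is completeness in (i): everything there hinges on the Fatou property (iii), which is exactly why I would prove (ii)--(iii) before (i). The second delicate point is the local integrability of the dual weight $\sigma^{-1/(p-1)}$ in (v), which is precisely what the Muckenhoupt membership $\sigma\in A_{q-\varepsilon}(\Omega)$ furnished by Lemma~\ref{lemma5} provides; without that reduction the estimate \eqref{v} would not be available from the bare grandisator hypothesis.
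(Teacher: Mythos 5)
Your proposal is correct and follows essentially the same route as the paper: parts (iv) and (v) are handled exactly as in the printed proof (H\"older's inequality with exponents $\tfrac{q}{q-\varepsilon}$, $\tfrac{q}{\varepsilon}$ combined with the embedding \eqref{12} for (iv), and Lemma~\ref{lemma5} plus H\"older against the dual weight $(wa^{\varepsilon_0})^{-1/(q-\varepsilon_0-1)}$ for (v)), while for (i)--(iii) the paper simply writes ``it is easy to check'' and your lattice/Fatou/Riesz--Fischer chain is the standard way to supply those details. The only point to watch is that your step $\int_E w\,dx<\infty$ in (iv) is not literally among the stated hypotheses (which give $wa^{\varepsilon}\in L^1_{\loc}$ only for $\varepsilon>0$), but the paper's own argument relies on the same implicit finiteness of $\|\chi_E\|_{L^q(\Omega,w)}$, so this is not a defect relative to the original proof.
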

\begin{proof}
	It is easy to check properties (i)--(iii).
	
	Consider a measurable bounded set $E\subset \Omega$,
	for $0<\varepsilon <q-1$, using H\"{o}lder's inequality with the exponents $\frac{q}{q-\varepsilon}$  and $\frac{q}{\varepsilon}$, we get 
	\begin{equation}\label{neq:chiE}
	\begin{aligned} 
		\|\chi_E&\|_{L_a^{q)}(\Omega,w)} = \displaystyle\sup_{0<\varepsilon <q-1}  \varepsilon^{\frac{1}{q-\varepsilon}} \bigg( \int\limits_\Omega |\chi_E(x)|^{q-\varepsilon} w(x) a^{\varepsilon}(x)dx \bigg)^{\frac{1}{q-\varepsilon}} \\
		& \le \displaystyle\sup_{0<\varepsilon <q-1}  \varepsilon^{\frac{1}{q-\varepsilon}} \bigg( \int\limits_\Omega |\chi_E(x)|^q w(x) dx\bigg)^{\frac{1}{q}}\bigg( \int\limits_\Omega a^q(x) w(x) dx\bigg)^{\frac{\varepsilon}{q(q-\varepsilon)}} \\
		& = \|\chi_E\|_{L^q(\Omega,w)} \displaystyle\sup_{0<\varepsilon <q-1}  \varepsilon^{\frac{1}{q-\varepsilon}}\bigg( \int\limits_\Omega a^{q-\varepsilon}(x)  w(x) a^\varepsilon(x) dx\bigg)^{\frac{1}{q-\varepsilon}} \|a\|^{-1}_{L^q(\Omega,w)} \\
		&  = \|\chi_E\|_{L^q(\Omega,w)} \,  \|a\|_{L^{q)}_{a}(\Omega,w)} \, \|a\|^{-1}_{L^q(\Omega,w).} 
	\end{aligned}
	\end{equation}
	Then the point (iv) follows from \eqref{12} and \eqref{neq:chiE}. 

	To prove (v) let us consider a compact $K\subset \Omega$ and $f \in L^{q)}_{a} (\Omega,w)$. By Lemma~\ref{lemma5} we know that there exists $0<\varepsilon_0<\delta$ such that $wa^{\varepsilon_0} \in A_{q-\varepsilon_0}(\Omega)$.
	Then by the H{\"o}lder inequality, we obtain
	\begin{equation}\label{est:loc}
	\begin{aligned}
		\int\limits_{K} |f(x)|\, dx 
		& = \int\limits_{K} |f(x)| (w a^{\varepsilon_0})^{\frac{1}{q-\varepsilon_0}} (w a^{\varepsilon_0})^{-\frac{1}{q-\varepsilon_0}}\, dx \\
		& \leq C_K |K|\varepsilon_0^\frac{-1}{q-\varepsilon_0} \varepsilon_0^\frac1{q-\varepsilon_0}\|f\|_{L^{q-\varepsilon_0}(\Omega, w a^{\varepsilon_0})} [w a^{\varepsilon_0}]_{A_{q-\varepsilon_0}(\Omega)}^{\frac{1}{q-\varepsilon_0}} \left(\int\limits_K wa^{\varepsilon_0}\right)^{-\frac{1}{q-\varepsilon_0}}\\
		& \leq C_K |K|\|f\|_{L^{q)}_a(\Omega, w)}  \left(\varepsilon_0\int\limits_K wa^{\varepsilon_0}\right)^{-\frac{1}{q-\varepsilon_0}} [w a^{\varepsilon_0}]_{A_{q-\varepsilon_0}(\Omega)}^{\frac{1}{q-\varepsilon_0}} < \infty.
	\end{aligned}
	\end{equation}
	The estimate \eqref{v} can be obtained similarly by corresponding replacement of $K$ by $E$ in \eqref{est:loc}.
\end{proof}
\begin{remark}\label{rem:notBFS}
By Proposition~\ref{prop:generalizedGL} it is easy to see that $ L^{q)}(\Omega)$, with $\Omega$ bounded, is a Banach function space.
The space $L^{q)}_{a} (\Omega,w)$ may instead not be a Banach function space 
for an arbitrary choice of weight functions $a$ and $w$.
\end{remark}

The following theorem, proved in \cite{Uma2014}, gives the sufficient condition for the maximal operator $M\colon L^{q)}_{a}(\mathbb R^n,w) \to L^{q)}_{a}(\mathbb R^n,w)$ to be bounded. 
 
\begin{theorem} \label{thm:E}
	Let $1 < q < \infty$ and $\delta>0$ be such that $a^\delta \in A_q(\mathbb R^n ) $ for some $a \in L^q(\mathbb R^n,w)$. 
	If $w\in A_q(\mathbb R^n)$, then the maximal operator is bounded on $L^{q)}_{a}(\mathbb R^n,w)$, i.e. 
	\begin{equation}\label{lhe}
	\|Mg\|_{L^{q)}_{a}(\mathbb R^n,w)} \leq K_q \|g\|_{L^{q)}_{a}(\mathbb R^n,w)}
	\end{equation}
	for all $g \in L^{q)}_{a}(\mathbb R^n,w)$, where $K_q$ is a positive constant.
\end{theorem}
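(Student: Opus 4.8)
The plan is to fix $\varepsilon\in(0,q-1)$ and control the corresponding term
$\bigl(\varepsilon\int_{\mathbb R^n}(Mg)^{q-\varepsilon}wa^\varepsilon\bigr)^{1/(q-\varepsilon)}=\varepsilon^{1/(q-\varepsilon)}\|Mg\|_{L^{q-\varepsilon}(\mathbb R^n,wa^\varepsilon)}$
in the supremum defining $\|Mg\|_{L^{q)}_{a}(\mathbb R^n,w)}$, showing that each such term is $\le K_q\|g\|_{L^{q)}_{a}(\mathbb R^n,w)}$ with $K_q$ independent of $\varepsilon$. First I would reduce to the case $\delta<q-1$: if $\delta\ge q-1$, Proposition~\ref{Aq_prop}(iii) lets me replace $a^\delta$ by $a^{\delta_0}=(a^\delta)^{\alpha}\in A_q(\mathbb R^n)$ with $\delta_0=\alpha\delta<q-1$, exactly as noted after Lemma~\ref{lemma5}. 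Then Lemma~\ref{lemma5} produces a single exponent $\varepsilon_0\in(0,\delta)$, with $q-\varepsilon_0>1$, for which $wa^{\varepsilon_0}\in A_{q-\varepsilon_0}(\mathbb R^n)$. The argument then splits according to whether $\varepsilon\le\varepsilon_0$ or $\varepsilon_0<\varepsilon<q-1$.

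For $0<\varepsilon\le\varepsilon_0$ I would use that $wa^\varepsilon$ is itself a Muckenhoupt weight of the right class. Writing $wa^\varepsilon=w^{p(1-\theta)/q}(wa^{\varepsilon_0})^{p\theta/(q-\varepsilon_0)}$ with $p=q-\varepsilon$ and $\theta=\frac{\varepsilon(q-\varepsilon_0)}{(q-\varepsilon)\varepsilon_0}\in[0,1]$ (the identity $\tfrac1p=\tfrac{1-\theta}{q}+\tfrac{\theta}{q-\varepsilon_0}$ holds for precisely this $\theta$), the interpolation theorem for $A_p$-weights with change of measure gives $wa^\varepsilon\in A_{q-\varepsilon}(\mathbb R^n)$ with $[wa^\varepsilon]_{A_{q-\varepsilon}}$ bounded by a power of $\max\{[w]_{A_q},[wa^{\varepsilon_0}]_{A_{q-\varepsilon_0}}\}$, uniformly in $\varepsilon$. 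The classical Muckenhoupt theorem on the weighted boundedness of $M$ then yields $\|Mg\|_{L^{q-\varepsilon}(wa^\varepsilon)}\le C\|g\|_{L^{q-\varepsilon}(wa^\varepsilon)}$, where $C$ depends only on $n$, on $q-\varepsilon\in[q-\varepsilon_0,q)$ (bounded away from $1$), and on the uniform $A$-constant; multiplying through by $\varepsilon^{1/(q-\varepsilon)}$ bounds this part of the supremum by $C\|g\|_{L^{q)}_{a}(\mathbb R^n,w)}$.

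The delicate range is $\varepsilon_0<\varepsilon<q-1$, and this is where I expect the main obstacle: here $wa^\varepsilon$ need not lie in $A_{q-\varepsilon}(\mathbb R^n)$, and even if it did, the weighted operator norm of $M$ on $L^{q-\varepsilon}$ would blow up as $q-\varepsilon\to1$. To bypass this I would transfer everything back to the good exponent $\varepsilon_0$ via Hölder's inequality. Splitting $wa^\varepsilon=(wa^{\varepsilon_0})\,a^{\varepsilon-\varepsilon_0}$ and applying Hölder with exponents $\frac{q-\varepsilon_0}{q-\varepsilon}$ and $\frac{q-\varepsilon_0}{\varepsilon-\varepsilon_0}$, the first factor collapses to $\|Mg\|_{L^{q-\varepsilon_0}(wa^{\varepsilon_0})}^{q-\varepsilon}$, while the second reduces, thanks to the hypothesis $a\in L^q(\mathbb R^n,w)$, to a power of $\int_{\mathbb R^n}wa^q=\|a\|_{L^q(\mathbb R^n,w)}^q<\infty$; concretely,
\[
\int_{\mathbb R^n}(Mg)^{q-\varepsilon}wa^\varepsilon\le\|Mg\|_{L^{q-\varepsilon_0}(wa^{\varepsilon_0})}^{\,q-\varepsilon}\,\|a\|_{L^q(\mathbb R^n,w)}^{\,q(\varepsilon-\varepsilon_0)/(q-\varepsilon_0)}.
\]
Since $wa^{\varepsilon_0}\in A_{q-\varepsilon_0}$, the $\varepsilon_0$-term is already controlled by $\|Mg\|_{L^{q-\varepsilon_0}(wa^{\varepsilon_0})}\le C\varepsilon_0^{-1/(q-\varepsilon_0)}\|g\|_{L^{q)}_{a}(\mathbb R^n,w)}$, exactly as in the previous paragraph.

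Taking $(q-\varepsilon)$-th roots and multiplying by $\varepsilon^{1/(q-\varepsilon)}$, it then remains to check that the resulting prefactor — the product of $\varepsilon^{1/(q-\varepsilon)}$, the constant $\varepsilon_0^{-1/(q-\varepsilon_0)}$, and $\|a\|_{L^q(\mathbb R^n,w)}^{q(\varepsilon-\varepsilon_0)/((q-\varepsilon_0)(q-\varepsilon))}$ — stays bounded on $(\varepsilon_0,q-1)$; this is routine, since each factor is continuous and has a finite limit both as $\varepsilon\to\varepsilon_0^{+}$ and as $\varepsilon\to(q-1)^{-}$. Collecting the two ranges yields \eqref{lhe} with an explicit $K_q$, and the finiteness of the right-hand side for $g\in L^{q)}_{a}(\mathbb R^n,w)$ is guaranteed by the embeddings \eqref{12}. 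The crux of the proof is thus the passage to large $\varepsilon$: the weighted maximal inequality is genuinely available only at the single exponent $\varepsilon_0$ supplied by Lemma~\ref{lemma5}, and the integrability $a\in L^q(\mathbb R^n,w)$ is precisely what allows the Hölder step to absorb the excess $a^{\varepsilon-\varepsilon_0}$ and reduce to that one good exponent.
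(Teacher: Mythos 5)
The paper itself offers no proof of Theorem~\ref{thm:E}: it is quoted as a result of \cite{Uma2014}, so there is no in-text argument to compare yours against. Judged on its own terms, your proof is correct and complete, and it is essentially the argument one would expect for this statement. The two pillars are sound: (a) for $0<\varepsilon\le\varepsilon_0$, the factorization $wa^\varepsilon=w^{p(1-\theta)/q}(wa^{\varepsilon_0})^{p\theta/(q-\varepsilon_0)}$ with $\theta=\frac{\varepsilon(q-\varepsilon_0)}{\varepsilon_0(q-\varepsilon)}$ checks out (the exponent of $w$ comes to $1$ and that of $a$ to $\varepsilon$), the H\"older-based interpolation of Muckenhoupt classes with change of measure gives $[wa^\varepsilon]_{A_{q-\varepsilon}}$ uniformly bounded, and since $q-\varepsilon$ stays in $[q-\varepsilon_0,q)$, hence away from $1$, Muckenhoupt's theorem applies with a uniform constant; (b) for $\varepsilon_0<\varepsilon<q-1$, your H\"older step with exponents $\frac{q-\varepsilon_0}{q-\varepsilon}$ and $\frac{q-\varepsilon_0}{\varepsilon-\varepsilon_0}$ is computed correctly — the second factor is exactly $\bigl(\int_{\mathbb R^n}wa^q\bigr)^{(\varepsilon-\varepsilon_0)/(q-\varepsilon_0)}$, which is where the hypothesis $a\in L^q(\mathbb R^n,w)$ enters, and the residual prefactor is continuous and bounded on $[\varepsilon_0,q-1]$. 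The preliminary reduction to $\delta<q-1$ via Proposition~\ref{Aq_prop}(iii) and the use of Lemma~\ref{lemma5} to produce the single good exponent $\varepsilon_0$ match exactly the remark the paper makes after Lemma~\ref{lemma5}. You correctly identify the crux: the weighted maximal inequality is only directly available at $\varepsilon_0$, and the two regimes must be bridged by interpolation (downward) and by H\"older absorption of $a^{\varepsilon-\varepsilon_0}$ (upward). The only mild caveat is that the $A_p$-interpolation-with-change-of-measure lemma and the uniform dependence of $\|M\|_{L^p(v)\to L^p(v)}$ on $p$ and $[v]_{A_p}$ are invoked without proof, but both are standard and correctly stated.
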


\begin{remark}
	It is unknown if the condition $w\in A_q(\mathbb R^n)$ is also necessary for the boundedness of $M$ on $L^{q)}_{a}(\mathbb R^n,w)$. 
\end{remark}

As a consequence of Theorem~\ref{thm:E} we obtain the boundedness of the maximal operator on grand Lebesgue spaces in the case of a bounded domain $\Omega\subset\mathbb R^n$.

\begin{proposition} \label{HLGL}
Let $1<q<\infty$,  $\Omega\subset\mathbb R^n$, be a bounded domain and  $t \in (0,\infty]$ be a fixed number. 
Then the inequality
\begin{equation}\label{HLint0}
\|M_tg\|_{L^{q)}(\Omega)}\leq K_q
\|g\|_{L^{q)}(\Omega)}
\end{equation}
holds for all  $g\in L^{q)}(\Omega)$ with some constant $K_q$ independent of $g$. 
\end{proposition}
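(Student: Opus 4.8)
The plan is to deduce this bounded-domain, unweighted statement from Theorem~\ref{thm:E} by realizing $L^{q)}(\Omega)$ as the restriction to $\Omega$ of a suitable weighted grand Lebesgue space on all of $\RR^n$. First I would dispose of the parameter $t$: since $M_t g \le M g = M_\infty g$ pointwise for every $t\in(0,\infty]$, and the grand norm is monotone in the modulus (immediate from its definition, cf.\ Proposition~\ref{prop:generalizedGL}(ii) with $w=a=1$), it suffices to prove the estimate for $t=\infty$, i.e.\ $\|Mg\|_{L^{q)}(\Omega)}\le K_q\|g\|_{L^{q)}(\Omega)}$ with a constant independent of $t$. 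Throughout, $g$ is extended by zero to $\RR^n$; this extension lies in $L^1(\RR^n)$ because $\Omega$ is bounded and $L^{q)}(\Omega)\hookrightarrow L^{q-\varepsilon}(\Omega)\hookrightarrow L^1(\Omega)$ for small $\varepsilon$, so $Mg$ is well defined.

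Next I would choose the weight and the grandisator to which Theorem~\ref{thm:E} applies. Take $w\equiv 1$, which is trivially in $A_q(\RR^n)$, and set $a(x)=(1+|x|)^{-\beta}$. For $\beta>n/q$ one has $a\in L^q(\RR^n)=L^q(\RR^n,w)$, and for any $\delta$ with $0<\delta<n/\beta$ the weight $a^\delta(x)=(1+|x|)^{-\beta\delta}$ satisfies $a^\delta\in A_q(\RR^n)$, since a weight comparable to a positive constant near the origin and to $|x|^{-\gamma}$ at infinity belongs to $A_q(\RR^n)$ precisely when $0\le\gamma<n$, and here $\gamma=\beta\delta<n$. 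Thus the hypotheses of Theorem~\ref{thm:E} are met, and the maximal operator is bounded on $L^{q)}_{a}(\RR^n,w)=L^{q)}_{a}(\RR^n)$:
\begin{equation*}
	\|Mg\|_{L^{q)}_{a}(\RR^n)}\le K_q\,\|g\|_{L^{q)}_{a}(\RR^n)}.
\end{equation*}

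The final step is to compare the unweighted grand norm over $\Omega$ with the weighted grand norm over $\RR^n$, using that $\Omega$ is bounded. If $\Omega\subset B(0,R)$, then $c_1:=(1+R)^{-\beta}\le a(x)\le 1$ for all $x\in\Omega$, so $c_1^{\varepsilon}\le a^\varepsilon\le 1$ on $\Omega$ for every $0<\varepsilon<q-1$. For the zero-extended $g$ this gives, at the level of the $\varepsilon$-integrals, $\int_{\RR^n}|g|^{q-\varepsilon}a^\varepsilon=\int_\Omega|g|^{q-\varepsilon}a^\varepsilon\le\int_\Omega|g|^{q-\varepsilon}$, whence $\|g\|_{L^{q)}_{a}(\RR^n)}\le\|g\|_{L^{q)}(\Omega)}$ (in particular the extension lies in $L^{q)}_{a}(\RR^n)$). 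In the other direction, on $\Omega$ we have $a^\varepsilon\ge c_1^\varepsilon$, so $\int_\Omega|Mg|^{q-\varepsilon}\le c_1^{-\varepsilon}\int_{\RR^n}|Mg|^{q-\varepsilon}a^\varepsilon$; raising to the power $1/(q-\varepsilon)$ produces the factor $c_1^{-\varepsilon/(q-\varepsilon)}$, which stays bounded by $\max(1,c_1^{-(q-1)})$ uniformly in $\varepsilon\in(0,q-1)$ because the exponent $\varepsilon/(q-\varepsilon)$ ranges in $(0,q-1)$. Taking the supremum over $\varepsilon$ yields $\|Mg\|_{L^{q)}(\Omega)}\le C\,\|Mg\|_{L^{q)}_{a}(\RR^n)}$ with $C=\max(1,c_1^{-(q-1)})$. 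Chaining the three inequalities gives $\|M_tg\|_{L^{q)}(\Omega)}\le\|Mg\|_{L^{q)}(\Omega)}\le C\,\|Mg\|_{L^{q)}_a(\RR^n)}\le C\,K_q\,\|g\|_{L^{q)}_a(\RR^n)}\le C\,K_q\,\|g\|_{L^{q)}(\Omega)}$, which is \eqref{HLint0} with constant $CK_q$.

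I expect the only delicate point to be the passage from the $\varepsilon$-integral comparisons to the comparison of the grand (sup-over-$\varepsilon$) norms: one must check that the weight-induced factors $c_1^{\pm\varepsilon/(q-\varepsilon)}$ remain bounded above and below uniformly for $0<\varepsilon<q-1$, which is what makes the two-sided equivalence of the norms on $\Omega$ legitimate. Verifying that the chosen $a$ simultaneously satisfies $a\in L^q(\RR^n)$ and $a^\delta\in A_q(\RR^n)$ — i.e.\ that the integrability at infinity and the $A_q$ decay constraint are compatible — is the other point requiring care, but it is guaranteed by the freedom to shrink $\delta$ after fixing $\beta>n/q$.
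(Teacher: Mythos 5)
Your argument is correct, and it rests on the same reduction as the paper's proof: extend $g$ by zero, dominate $M_t$ by $M$, and invoke Theorem~\ref{thm:E} on a suitably chosen global weighted grand space. The difference is in which weight carries the decay at infinity. The paper keeps the grandisator trivial, $a\equiv 1$, and chooses $w\in A_q(\RR^n)\cap L^1(\RR^n)$ with $w\equiv 1$ on a large cube containing $\Omega$; then the hypothesis $a\in L^q(\RR^n,w)$ reduces to $w\in L^1$, the condition $a^\delta\in A_q$ is trivial, and the weighted grand norm of any function supported in $\Omega$ \emph{equals} the unweighted one, so no norm comparison is needed. You instead keep $w\equiv 1$ and put the decay into the grandisator $a(x)=(1+|x|)^{-\beta}$, which forces you to verify two extra facts: that $\beta>n/q$ and $\beta\delta<n$ can be satisfied simultaneously (true, by shrinking $\delta$, and your $A_q$ criterion for such power-type weights is the standard one), and that the factors $c_1^{\pm\varepsilon/(q-\varepsilon)}$ arising from $c_1\le a\le 1$ on $\Omega$ stay uniformly bounded over $\varepsilon\in(0,q-1)$ — the delicate point you correctly flag, and which you handle correctly since $\varepsilon/(q-\varepsilon)\le q-1$. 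Both routes are valid; the paper's choice buys an exact norm identity and a shorter verification, while yours illustrates that the grandisator alone can absorb the lack of integrability of the constant weight at infinity.
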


\begin{proof}
We take a weight 
$a\equiv1$ in $\mathbb R^n$ and a weight $w\colon \mathbb R^n\to(0,\infty)$
such that $w\in A_q(\mathbb R^n)\cap L^1(\mathbb R^n)$ and $w\equiv1$ in some cube $Q\supset\Omega$ big enough such that $B(x,t)\subset Q$ for any 
$x\in\Omega$. 
Then we have
\begin{equation*}
wa^\varepsilon=w\in L^{1}_{\loc}(\Omega) \quad\text{for all $\varepsilon
\in (0, q -1)$.}
\end{equation*}
We extend $g\in L^{q)}(\Omega)$ by 0 outside $\Omega$.
By \eqref{lhe} we get
\begin{equation*}\label{HLint1}
	\|M_tg\|_{L^{q)}(\Omega)}\leq\|M_tg\|_{L^{q)}_1(\mathbb R^n,w)}\leq K_q \|g\|_{L^{q)}_{a}(\mathbb R^n,w)}=
	K_q \|g\|_{L^{q)}(\Omega)}
\end{equation*}
and \eqref{HLint0} is proved.
\end{proof}

\subsection{Grand Sobolev spaces}\label{sec:grand_sobolev}
\setcounter{equation}{0}

The grand version of Sobolev spaces $W^{1,q)}(\Omega)$, if $\Omega$ is a bounded domain, was defined and studied in \cite{Sbo1996}.
We begin with the following definition.

\begin{definition}
Let $1 < q <\infty$, $\Omega \subset\mathbb R^n$ be a bounded domain. 
The {\it grand Sobolev space}, denoted by $W^{1,q)}(\Omega)$, consists of all  $f\in \bigcap\limits_{0<\varepsilon<q-1} W^{1,q-\varepsilon}(\Omega)$ such that
\begin{equation}\label{gss}
\|f\|_{W^{1,q)}(\Omega)} :=
\sup\limits_{0<\varepsilon< q-1}
\varepsilon^\frac1{q-\varepsilon}
\|f\|_{W^{1,q-\varepsilon}(\Omega)}<\infty.
\end{equation}
\end{definition}

\noindent In \cite{Sbo1996} the grand Sobolev space was defined 
by the means of the norm
\begin{equation}\label{gss1}
	|||f|||_{W^{1,q)}(\Omega)}: =
	\|f\|_{L^{q)}(\Omega)}
	+\|\nabla f\|_{L^{q)}(\Omega) <\infty.}
\end{equation}

\noindent However, the norms \eqref{gss} and \eqref{gss1} are equivalent.

\begin{proposition}\label{kl} The following estimates hold
\begin{equation*}
	\frac{1}{4}|||f|||_{W^{1,q)}(\Omega)}\le  \|f\|_{W^{1,q)}(\Omega)}\le |||f|||_{W^{1,q)}(\Omega)}.
\end{equation*}
\end{proposition}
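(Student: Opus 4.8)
The plan is to express both norms through the two nonnegative functions of the parameter $\varepsilon$,
\[
u(\varepsilon):=\varepsilon^{\frac1{q-\varepsilon}}\|f\|_{L^{q-\varepsilon}(\Omega)},\qquad
v(\varepsilon):=\varepsilon^{\frac1{q-\varepsilon}}\|\nabla f\|_{L^{q-\varepsilon}(\Omega)},\qquad \varepsilon\in(0,q-1),
\]
and then to reduce the whole statement to the elementary comparison between the supremum of a sum and the sum of the suprema. Indeed, since $\|f\|_{W^{1,q-\varepsilon}(\Omega)}=\|f\|_{L^{q-\varepsilon}(\Omega)}+\|\nabla f\|_{L^{q-\varepsilon}(\Omega)}$, the definition \eqref{gss} gives $\|f\|_{W^{1,q)}(\Omega)}=\sup_{\varepsilon}\bigl(u(\varepsilon)+v(\varepsilon)\bigr)$, whereas \eqref{gss1} together with the definition of the grand Lebesgue norm \eqref{glsbd} yields $|||f|||_{W^{1,q)}(\Omega)}=\sup_{\varepsilon}u(\varepsilon)+\sup_{\varepsilon}v(\varepsilon)$.

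For the right-hand inequality I would fix $\varepsilon\in(0,q-1)$ and bound $u(\varepsilon)+v(\varepsilon)\le\sup_{\varepsilon}u+\sup_{\varepsilon}v=|||f|||_{W^{1,q)}(\Omega)}$; taking the supremum over $\varepsilon$ on the left gives $\|f\|_{W^{1,q)}(\Omega)}\le|||f|||_{W^{1,q)}(\Omega)}$. For the left-hand inequality I would use that $v\ge0$, so that $u(\varepsilon)\le u(\varepsilon)+v(\varepsilon)\le\sup_{\varepsilon}(u+v)=\|f\|_{W^{1,q)}(\Omega)}$ for every $\varepsilon$; passing to the supremum yields $\|f\|_{L^{q)}(\Omega)}=\sup_{\varepsilon}u\le\|f\|_{W^{1,q)}(\Omega)}$, and symmetrically $\|\nabla f\|_{L^{q)}(\Omega)}\le\|f\|_{W^{1,q)}(\Omega)}$. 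Adding the last two estimates gives $|||f|||_{W^{1,q)}(\Omega)}\le 2\|f\|_{W^{1,q)}(\Omega)}$, which is in fact stronger than the asserted bound; the stated constant $\tfrac14$ then follows a fortiori.

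There is no genuinely hard step here. The only point that requires a moment of care is that the suprema defining $\|f\|_{L^{q)}(\Omega)}$ and $\|\nabla f\|_{L^{q)}(\Omega)}$ need not be attained at the same value of $\varepsilon$, which is precisely the reason the sum of the suprema can exceed the supremum of the sum and why a multiplicative constant is lost in the lower bound. Because $u$ and $v$ are nonnegative, all the comparisons above hold pointwise in $\varepsilon$ before taking suprema, so no attainment or compactness argument is needed, and the loss is controlled by the harmless factor $2$.
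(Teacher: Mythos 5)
Your argument is correct and, like the paper's, ultimately rests on the elementary comparison between $\sup_\varepsilon(u+v)$ and $\sup_\varepsilon u+\sup_\varepsilon v$ for nonnegative $u,v$; the difference lies in how the inner norm $\|f\|_{W^{1,q-\varepsilon}(\Omega)}$ is read. You take it literally as the sum $\|f\|_{L^{q-\varepsilon}(\Omega)}+\|\nabla f\|_{L^{q-\varepsilon}(\Omega)}$, as defined in the introduction, and then both inequalities follow at once, with the sharper constant $\tfrac12$ in the lower bound (so the stated $\tfrac14$ holds a fortiori). The paper's proof instead treats $\|f\|_{W^{1,q-\varepsilon}(\Omega)}$ as the power combination $\bigl(\|f\|_{L^{q-\varepsilon}(\Omega)}^{q-\varepsilon}+\|\nabla f\|_{L^{q-\varepsilon}(\Omega)}^{q-\varepsilon}\bigr)^{1/(q-\varepsilon)}$, and therefore inserts the additional elementary estimates $a+b\le 2^{(p-1)/p}(a^p+b^p)^{1/p}$ (H\"older's inequality for sums) and $(a^p+b^p)^{1/p}\le a+b$ to pass between the $\ell^1$ and $\ell^{q-\varepsilon}$ combinations of the two terms; that extra factor $2^{(q-\varepsilon-1)/(q-\varepsilon)}\le 2$ is exactly where the constant $\tfrac14$ rather than $\tfrac12$ comes from. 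So your proof is valid for the norm as literally defined (and yields a better constant), while the paper's version handles the equivalent $(q-\varepsilon)$-power form of the Sobolev norm at the cost of one more factor of $2$. You also correctly isolate the only genuine point of care, namely that the two suprema need not be attained at the same $\varepsilon$.
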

\begin{proof} We use H\"{o}lder's inequality for sums and obtain
\begin{equation*}
\begin{aligned}
	|||f|||_{W^{1,q)}(\Omega)}&=  \sup\limits_{0<\varepsilon<q-1 } \varepsilon^\frac{1}{q-\varepsilon}\|f\|_{L^{q-\varepsilon}(\Omega)}+\sup\limits_{0<\varepsilon<q-1 } \varepsilon^\frac{1}{q-\varepsilon}\|\nabla f\|_{L^{q-\varepsilon}(\Omega)} 
	\\ &
	\le 2 \sup\limits_{0<\varepsilon<q-1 } \varepsilon^\frac{1}{q-\varepsilon}\left(\|f\|_{L^{q-\varepsilon}(\Omega)}+ \|\nabla f\|_{L^{q-\varepsilon}(\Omega)}\right)
	\\ &
	\le  2 \sup\limits_{0<\varepsilon<q-1 } 2^{\frac{q-\varepsilon-1}{q-\varepsilon}} \varepsilon^\frac{1}{q-\varepsilon}\left(\|f\|_{L^{q-\varepsilon}(\Omega)}^{q-\varepsilon}+ \|\nabla f\|_{L^{q-\varepsilon}(\Omega)}^{q-\varepsilon}\right)^{\frac{1}{q-\varepsilon}}
	\\ &
	\le 2^2 \|f\|_{W^{1,q)}(\Omega).}
\end{aligned}
\end{equation*}
Also, we have
\begin{equation*}\label{35}
\begin{aligned}
	 \|f\|_{W^{1,q)}(\Omega)}&
	=  \sup\limits_{0<\varepsilon<q-1 } \varepsilon^\frac{1}{q-\varepsilon}\bigg(\|f\|_{L^{q-\varepsilon}(\Omega)}^{q-\varepsilon}+ \|\nabla f\|_{L^{q-\varepsilon}(\Omega)}^{q-\varepsilon}\bigg) ^{\frac{1}{q-\varepsilon}}
	\\ &
	\le\sup\limits_{0<\varepsilon<q-1 } \varepsilon^\frac{1}{q-\varepsilon}\bigg(\|f\|_{L^{q-\varepsilon}(\Omega)}+ \|\nabla f\|_{L^{q-\varepsilon}(\Omega)}\bigg)
	\\ &
	\le \sup\limits_{0<\varepsilon<q-1 } \varepsilon^\frac{1}{q-\varepsilon}\|f\|_{L^{q-\varepsilon}(\Omega)}+  \sup\limits_{0<\varepsilon<q-1 } \varepsilon^\frac{1}{q-\varepsilon}\|\nabla f\|_{L^{q-\varepsilon}(\Omega)}
	\\&
	=|||f|||_{W^{1,q)}(\Omega)}
\end{aligned}
\end{equation*}
and the assertion follows.
\end{proof}

Now, we define the generalized grand Sobolev space on  $\Omega\subset\mathbb R^n$, $|\Omega| \le \infty$, as follows.
\begin{definition}\label{def:generalizedGS}
	Let $1<q<\infty$ and $w$, $a$ be weight functions on $\Omega$ such that
	$ w a^\varepsilon \in L^{1}_{\loc} (\Omega)$ for all $\varepsilon \in (0, q-1)$.
	The \textit{generalized grand Sobolev space} $W^{1,q)}_{a}(\Omega,w)$ is defined as the collection of all  $f \in  L^{q)}_{a}(\Omega, w)$ having a weak gradient
	$\nabla f$ in  $L^{q)}_{a}(\Omega, w)$, 
	equipped with the norm
	\begin{equation*}\label{kl1}
		\|f\|_{W^{1,q)}_{a}(\Omega,w)} :=
		\sup\limits_{0<\varepsilon< q-1}
		\varepsilon^\frac1{q-\varepsilon}\|f\|_{{W^{1,q-\varepsilon}(\Omega,w a^\varepsilon)}}<\infty.
	\end{equation*}
\end{definition}

\begin{remark}
By the definition of the space $W^{1,q)}_{a}(\Omega,w)$ and Proposition~\ref{kl}, it can be shown that 
\begin{equation}\label{kl2}
\frac{1}{4}\left(\|f\|_{L^{p)}_{a}(\Omega,w)}+ \|\nabla f\|_{L^{p)}_{a}(\Omega,w)}\right) \leq \|f\|_{W^{1,q)}_{a}(\Omega,w)}\le \|f\|_{L^{p)}_{a}(\Omega,w)}+ \|\nabla f\|_{L^{p)}_{a}(\Omega,w)}.
\end{equation}
\end{remark}

\begin{remark}
	If the weight 
	$w\in A_q(\Omega)$ 
	then the weighted Sobolev space 
	$W^{1,q}(\Omega,w)$
	is a Banach space and functions 
	$f\in W^{1,q}(\Omega,w)$
	belong also to 
	$W^{1,1}_{\loc}(\Omega)$,
	see
	\cite[Proposition 2.1]{Kil1994}.
	Similar properties are valid for grand Sobolev spaces
	$W^{1,q)}_{a}(\Omega,w)$. 
\end{remark}

\begin{proposition}\label{prop:W11_loc}
	Let $1<q<\infty$, $wa^{\varepsilon}\in L^1_{\loc}{(\Omega)}$ for all $\varepsilon \in (0, q-1)$, and $a\in L^q{(\Omega,w)}$. 
	Let also $w\in A_q(\Omega)$ and $a^{\delta}\in A_q(\Omega)$ for some $\delta>0$.
	Then $W^{1,q)}_{a}(\Omega,w) \subset W^{1,1}_{\loc}(\Omega)$, 
	and a Cauchy sequence $\{f_n\}_{n\in\mathbb{N}}\subset W^{1,q)}_{a}(\Omega,w)$
	is also Cauchy in $W^{1,1}(D)$ for any $D\Subset \Omega$.
\end{proposition}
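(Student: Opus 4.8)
The plan is to deduce both assertions from the local estimate \eqref{v} of Proposition~\ref{prop:generalizedGL}(v) together with the norm equivalence \eqref{kl2}. The hypotheses $w\in A_q(\Omega)$ and $a^\delta\in A_q(\Omega)$ are precisely those under which Proposition~\ref{prop:generalizedGL}(v) applies, so for every bounded measurable set $E\subset\Omega$ I would have at my disposal the inequality $\int_E|h|\,dx\le C\,\|h\|_{L^{q)}_a(\Omega,w)}$, valid for all $h\in L^{q)}_a(\Omega,w)$ with a constant $C$ depending only on $E$ and the data, not on $h$. Everything reduces to applying this estimate to the right functions.

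For the inclusion $W^{1,q)}_a(\Omega,w)\subset W^{1,1}_{\loc}(\Omega)$, fix $f\in W^{1,q)}_a(\Omega,w)$. By Definition~\ref{def:generalizedGS}, $f\in L^{q)}_a(\Omega,w)$ and $f$ possesses a weak gradient $\nabla f$ with $|\nabla f|\in L^{q)}_a(\Omega,w)$. Applying the embedding $L^{q)}_a(\Omega,w)\subset L^1_{\loc}(\Omega)$ from Proposition~\ref{prop:generalizedGL}(v) to $f$ and to each component of $\nabla f$ (using the lattice property, Proposition~\ref{prop:generalizedGL}(ii), to bound $|\partial_i f|\le|\nabla f|$), I would conclude that $f$ and $\nabla f$ are locally integrable on $\Omega$. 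Since $\nabla f$ is by construction already the weak gradient of $f$, this means exactly that $f\in W^{1,1}(D)$ for every $D\Subset\Omega$, that is, $f\in W^{1,1}_{\loc}(\Omega)$.

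For the statement about Cauchy sequences, let $\{f_n\}\subset W^{1,q)}_a(\Omega,w)$ be Cauchy. The left inequality in \eqref{kl2}, applied to the differences $f_n-f_m$, shows that both $\{f_n\}$ and $\{\nabla f_n\}$ are Cauchy in $L^{q)}_a(\Omega,w)$. Now fix $D\Subset\Omega$; then $D$ is a bounded measurable subset of $\Omega$, so \eqref{v} may be invoked with $E=D$. Applying it to $h=f_n-f_m$ and to $h=|\nabla f_n-\nabla f_m|$ yields $\|f_n-f_m\|_{L^1(D)}\le C_D\,\|f_n-f_m\|_{L^{q)}_a(\Omega,w)}$ and $\|\nabla f_n-\nabla f_m\|_{L^1(D)}\le C_D\,\|\nabla f_n-\nabla f_m\|_{L^{q)}_a(\Omega,w)}$, where $C_D$ is independent of $n$ and $m$. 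Letting $n,m\to\infty$ sends the right-hand sides to zero, whence $\{f_n\}$ is Cauchy in $W^{1,1}(D)$.

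The argument is in essence a bookkeeping of facts already established in Proposition~\ref{prop:generalizedGL}, so I do not expect a genuine obstacle. The single point that must be handled with care is that the constant $C$ in \eqref{v} is independent of the function: this uniformity is exactly what transfers the Cauchy property from the $L^{q)}_a$-norm to the $L^1(D)$-norm. A secondary bookkeeping point is to record that $D\Subset\Omega$ forces $D$ to be bounded, so that \eqref{v} is indeed applicable with $E=D$.
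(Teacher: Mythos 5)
Your proof is correct and follows essentially the same route as the paper: both rest on the local $L^1$ estimate \eqref{v} from Proposition~\ref{prop:generalizedGL}(v) applied to $f$ and $\nabla f$ (respectively to the differences $f_n-f_m$ and $\nabla f_n-\nabla f_m$), combined with the norm comparison \eqref{kl2} to pass between the $W^{1,q)}_{a}(\Omega,w)$ norm and the $L^{q)}_{a}(\Omega,w)$ norms of $f$ and $\nabla f$. The paper merely states this more tersely, absorbing the factor from \eqref{kl2} and the uniform constant into a single displayed inequality.
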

\begin{proof}

Let $D\Subset \Omega$. From the proof of Proposition~\ref{prop:generalizedGL}(v)
we obtain
\begin{equation*}
\begin{aligned}
	\int\limits_{D} |f(x)| + |\nabla f(x)|\, dx 
	\leq 4 C_D |D|\left(\varepsilon_0\int\limits_D wa^{\varepsilon_0}\right)^{-\frac{1}{q-\varepsilon_0}} [w a^{\varepsilon_0}]_{A_{q-\varepsilon_0}(\Omega)}^{\frac{1}{q-\varepsilon_0}}  \|f\|_{W^{q)}_a(\Omega, w)}.
\end{aligned}
\end{equation*}
Hence, 
$f \in W^{1,1}_{\loc}(\Omega)$.
Moreover, for any Cauchy sequence $\{f_n\}_{n\in\mathbb{N}}\subset W^{1,q)}_{a}(\Omega,w)$
we have 
$\|f_n - f_m\|_{W^{1,1}(D)}\leq C\|f_n - f_m\|_{W^{1,q)}_a(\Omega,w)}$,
i.e.\ $\{f_n\}_{n\in\mathbb{N}}$ is also a Cauchy sequence in $W^{1,1}(D)$.
\end{proof}

\begin{proposition}\label{prop:banach}
Let $1<q<\infty$, $wa^{\varepsilon}\in L^1_{\loc}{(\Omega)}$ for all $\varepsilon \in (0, q-1)$, and $a\in L^q{(\Omega,w)}$. 
Then the grand Sobolev space $W^{1,q)}_{a}(\Omega,w)$ is a Banach space if $w\in A_q(\Omega)$ and $a^{\delta}\in A_q(\Omega)$ for some $\delta>0$.
\end{proposition}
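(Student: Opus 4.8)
The plan is to verify completeness directly by producing the limit of an arbitrary Cauchy sequence and identifying it as an element of the space. First I would take a Cauchy sequence $\{f_n\}$ in $W^{1,q)}_{a}(\Omega,w)$. By the norm equivalence \eqref{kl2}, both $\{f_n\}$ and $\{\nabla f_n\}$ are then Cauchy sequences in $L^{q)}_{a}(\Omega,w)$. Since the hypothesis $a\in L^q(\Omega,w)$ is in force, Proposition~\ref{prop:generalizedGL}(i) tells us that $L^{q)}_{a}(\Omega,w)$ is a Banach space, so there exist functions $f$ and $v=(v_1,\dots,v_n)$ in $L^{q)}_{a}(\Omega,w)$ with $f_n\to f$ and $\nabla f_n\to v$ in $L^{q)}_{a}(\Omega,w)$.

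The central point is to prove that $v=\nabla f$ in the weak sense, which is what places $f$ in $W^{1,q)}_{a}(\Omega,w)$. For this I would invoke Proposition~\ref{prop:W11_loc}: under exactly the present hypotheses the sequence $\{f_n\}$ is also Cauchy in $W^{1,1}(D)$ for every $D\Subset\Omega$, hence converges in $W^{1,1}(D)$ to some $\tilde f$ whose weak gradient is the $W^{1,1}(D)$-limit of $\{\nabla f_n\}$. To reconcile this limit with the one obtained above, I would use the local integrability estimate \eqref{v} from Proposition~\ref{prop:generalizedGL}(v), which shows that convergence in $L^{q)}_{a}(\Omega,w)$ forces convergence in $L^1(E)$ on every bounded measurable set $E$, in particular on $D$. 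Consequently $f_n\to f$ and $\nabla f_n\to v$ in $L^1(D)$ as well, so $f=\tilde f$ and $v=\nabla\tilde f$ almost everywhere on $D$. As $D\Subset\Omega$ is arbitrary, this yields $f\in W^{1,1}_{\loc}(\Omega)$ with weak gradient $v$.

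Having established that $v=\nabla f$ a.e.\ on $\Omega$, and recalling that both $f$ and $\nabla f=v$ belong to $L^{q)}_{a}(\Omega,w)$, we conclude by Definition~\ref{def:generalizedGS} that $f\in W^{1,q)}_{a}(\Omega,w)$. Finally, the convergence $f_n\to f$ in the $W^{1,q)}_{a}$-norm follows from $f_n\to f$ and $\nabla f_n\to\nabla f$ in $L^{q)}_{a}(\Omega,w)$ together with the equivalence \eqref{kl2}. Thus every Cauchy sequence converges within $W^{1,q)}_{a}(\Omega,w)$, and the space is complete.

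The main obstacle is the middle step, namely transferring the weak-derivative identity to the limit. The difficulty is that the natural limit is furnished by completeness of $L^{q)}_{a}(\Omega,w)$, whereas the weak-gradient relation is most transparently checked in $L^1_{\loc}$; matching these two limits is precisely the role played by the local bound \eqref{v} and the $W^{1,1}_{\loc}$-Cauchy property supplied by Proposition~\ref{prop:W11_loc}, so once those two ingredients are in place the argument is routine.
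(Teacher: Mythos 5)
Your proof is correct and follows essentially the same route as the paper's: both reduce to completeness of $L^{q)}_{a}(\Omega,w)$ (Proposition~\ref{prop:generalizedGL}(i)) for the existence of limits $f$ and $v$, and both use the $W^{1,1}_{\loc}$ Cauchy property from Proposition~\ref{prop:W11_loc} to identify $v$ as the weak gradient of $f$. You merely spell out, via the local bound \eqref{v}, the limit-matching step that the paper leaves as "it is easy to see."
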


\begin{proof} It is enough to prove completeness. 
Let $\{f_n\}_{n\in\mathbb{N}}$ be a Cauchy sequence in $W^{1,q)}_{a}(\Omega,w)$,
by Proposition~\ref{prop:W11_loc} it is so in $W^{1,1}_{\loc}(\Omega)$, denote the limit by $f$. 
On the other hand, $\{f_n\}_{n\in\mathbb{N}}$ and $\{\partial_{\alpha} f_n\}_{n\in\mathbb{N}}$ are Cauchy in Banach spaces $L^{q)}_a(\Omega,w)$ for all multi-indexes $\alpha$, $|\alpha|=1$. 
Let $h$ and $h_{\alpha}$ be corresponding limits.
Then it is easy to see that 
$h=f$, $h_{\alpha} = \partial_\alpha f$ is a weak derivative of $f$,
and $\|f_n - f\|_{W^{1,q)}_{a}(\Omega,w)} \to 0$, i.e.\ $W^{1,q)}_{a}(\Omega,w)$ is complete.
\end{proof}

\begin {theorem}\label{th2}
The embedding  
$$W^{1,q}(\Omega,w) \hookrightarrow  W^{1,q)}_{a}(\Omega,w)$$ 
holds if $a\in L^q{(\Omega,w)}$. 
\end{theorem}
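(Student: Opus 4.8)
The plan is to reduce the weighted grand Sobolev embedding to the Lebesgue-level embedding already recorded in \eqref{12}, combined with the norm comparison \eqref{kl2}. Concretely, I would first fix $f \in W^{1,q}(\Omega,w)$, so that both $f$ and each component of $\nabla f$ lie in $L^q(\Omega,w)$, and $\nabla f$ is a weak gradient of $f$ in the sense of the footnote (in particular $f,\nabla f \in L^1_{\loc}(\Omega)$).

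Since $a \in L^q(\Omega,w)$ by hypothesis, the first continuous embedding in \eqref{12}, namely $L^q(\Omega,w) \hookrightarrow L^{q)}_{a}(\Omega,w)$, is available. I would apply it to $f$ and to $\nabla f$ separately, obtaining a constant $C>0$, independent of $f$, with
\begin{equation*}
	\|f\|_{L^{q)}_{a}(\Omega,w)} \le C \|f\|_{L^q(\Omega,w)}, \qquad \|\nabla f\|_{L^{q)}_{a}(\Omega,w)} \le C \|\nabla f\|_{L^q(\Omega,w)}.
\end{equation*}
In particular $f,\nabla f \in L^{q)}_{a}(\Omega,w)$. Because weak differentiability is a local condition tested against $C_0^\infty$ functions and does not involve the norm, the same $\nabla f$ serves as the weak gradient of $f$ inside $W^{1,q)}_{a}(\Omega,w)$, so $f \in W^{1,q)}_{a}(\Omega,w)$.

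It then remains to pass from the two Lebesgue-type bounds to the grand Sobolev norm. Using the right-hand inequality in \eqref{kl2} together with the two displayed estimates, I would conclude
\begin{equation*}
	\|f\|_{W^{1,q)}_{a}(\Omega,w)} \le \|f\|_{L^{q)}_{a}(\Omega,w)} + \|\nabla f\|_{L^{q)}_{a}(\Omega,w)} \le C\left(\|f\|_{L^q(\Omega,w)} + \|\nabla f\|_{L^q(\Omega,w)}\right) = C \|f\|_{W^{1,q}(\Omega,w)},
\end{equation*}
which is exactly the asserted continuous embedding.

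If one prefers a self-contained argument not invoking \eqref{12}, the embedding $L^q(\Omega,w) \hookrightarrow L^{q)}_{a}(\Omega,w)$ can be reproved directly by the same H\"older splitting used for $\chi_E$ in \eqref{neq:chiE}: applying H\"older with exponents $\frac{q}{q-\varepsilon}$ and $\frac{q}{\varepsilon}$ to $\int_\Omega |f|^{q-\varepsilon} w a^\varepsilon\,dx$ gives the bound $\|f\|_{L^q(\Omega,w)}^{q-\varepsilon}\|a\|_{L^q(\Omega,w)}^{\varepsilon}$, after which the supremum over $0<\varepsilon<q-1$ stays finite because the factor $\varepsilon^{1/(q-\varepsilon)}\|a\|_{L^q(\Omega,w)}^{\varepsilon/(q-\varepsilon)}$ remains bounded when $a \in L^q(\Omega,w)$. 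The only point requiring a word of care, which I expect to be the mildest obstacle, is the identification of the weak gradient across the two spaces; it is settled by observing that the defining integration-by-parts identity is insensitive to the weight, so nothing beyond $f,\nabla f \in L^1_{\loc}(\Omega)$ has to be checked.
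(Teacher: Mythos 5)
Your proposal is correct and follows essentially the same route as the paper: the paper also reduces to the estimate $\|h\|_{L^{q)}_a(\Omega,w)}\le \|h\|_{L^q(\Omega,w)}\,\|a\|_{L^{q)}_a(\Omega,w)}\,\|a\|^{-1}_{L^q(\Omega,w)}$ for $h=f$ and $h=\nabla f$ (obtained by exactly the H\"older splitting of \eqref{neq:chiE} that you describe as the ``self-contained'' variant, with \eqref{12} invoked only to guarantee $\|a\|_{L^{q)}_a(\Omega,w)}<\infty$), and then combines these with \eqref{kl2} to get the norm bound. Your additional remark on identifying the weak gradient across the two spaces is a harmless extra check that the paper leaves implicit.
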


\begin{proof} Let $a \in L^q(\Omega,w) $ and $f \in W^{1,q}(\Omega,w)$. 
Similarly to \eqref{neq:chiE} we can obtain
\begin{equation} \label{eq01} 
	\|f\|_{L_a^{q)}(\Omega,w)} \leq \|f\|_{L^q(\Omega,w)} \,  \|a\|_{L^{q)}_{a}(\Omega,w)} \, \|a\|^{-1}_{L^q(\Omega,w)} 
\end{equation}
\noindent and
\begin{equation}\label{55}
	\|\nabla f\|_{L_a^{q)}(\Omega,w)}\le\|\nabla f\|_{L^q(\Omega,w)} \,  \|a\|_{L^{q)}_{a}(\Omega,w)} \, \|a\|^{-1}_{L^q(\Omega,w).}
\end{equation}
Since \eqref{kl2} holds, using \eqref{eq01} and \eqref{55}, we obtain
\begin{equation*}\label{56}
\begin{aligned}
\|f\|_{W^{1,q)}_{a}(\Omega,w)} & 
\le   \|a\|_{L^{q)}(\Omega,w)} \, \|a\|^{-1}_{L^q(\Omega,w)} \left(\|f\|_{L^{q}(\Omega,w)}+ \|\nabla f\|_{L^{q}(\Omega,w)}\right) \\
& \le  4 \|a\|_{L^{q)}(\Omega,w)} \, \|a\|^{-1}_{L^q(\Omega,~w)}\|f\|_{W^{1,q}(\Omega,w)}\\
& = K_a  \|f\|_{W^{1,q}(\Omega,w),}
\end{aligned}
\end{equation*}
where $ K_a = 4\|a\|_{L^{q)}(\Omega,w)} \, \|a\|^{-1}_{L^q(\Omega,w)}$. 
Now since $a\in L^q(\Omega ,w)$ and in view of \eqref{12}, the embedding $L^{q}(\Omega,w) \hookrightarrow L^{q)}_{a}(\Omega,w)$ holds, it follows that $K_a< \infty$.
\end{proof}

In view of Theorem \ref{th2} and embeddings \eqref{12}, if $a\in L^q(\Omega,w)$ we have that the following embeddings hold for all $0< \varepsilon <q-1$
\begin{equation*}\label{re}
	W^{1,q}(\Omega,w) \hookrightarrow  W^{1,q)}_{a}(\Omega,w) \hookrightarrow W^{1,q-\varepsilon}(\Omega,w a^\varepsilon).
\end{equation*}

For further discussion of grand Lebesgue and Sobolev spaces we refer the reader to \cite{CapFioKar2008,CarSbo1997,DonSboSch2013,FioKar2004,FioMerRak2001,FioMerRak2002,FioSbo1998,FioForGog2018,ForOstSir2019,Gre1993,GreIwaSbo1997,JaiSinSinSte2019,Kok2010,KokMes2009,Mol2019,Sbo1998} and \cite[\S 7.2]{CasRaf2016}.

In the celebrated paper \cite{Muc1972} it was proved that the maximal operator $M\colon L^q(\mathbb R^n,w) \rightarrow L^q(\mathbb R^n,w)$ is bounded for $1 < q < \infty$ 
if and only if $w \in A_q(\mathbb R^n)$. 
Later, the class $A_q(\mathbb R^n )$ turned out to be very useful since it also characterizes the $L^q$-boundedness of 
several other operators such as the Hilbert transform \cite{HunMucWhe1973}. 
In fact, the same class characterizes also 
the boundedness of the maximal operator on grand Lebesgue spaces
(one-dimensional result was proved in \cite{FioGupJai2008}).

Let us present explicitly the pointwise description for weighted Sobolev spaces, which was not stated before according to the authors' knowledge. 
Some related properties can be found in the proof of \cite[Corollary~3]{Haj1996}.

\begin{corollary}\label{cor:weighted_Sobolev} 
	Let $1<q<\infty$ and $w\in A_q(\mathbb R^n)$.
	A~function  
	$f$ belongs to $W^{1,q}(\RR^n,w)$
	if and only if 
	$f\in L^{q}(\RR^n,w)$ and 
	there exist a non negative function $g\in L^{q}(\RR,w)$ 
	such that the inequality
	\begin{equation}\label{description}
	|f(x)-f(y)|\leq |x-y|
	(g(x)+g(y))
	\end{equation}
	holds for $x$, $y \in \mathbb R^n$ a.e.
\end{corollary}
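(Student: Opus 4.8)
The plan is to realize $W^{1,q}(\RR^n,w)$ as the abstract Sobolev space $W^1 X(\RR^n)$ with $X(\RR^n)=L^q(\RR^n,w)$ and then to invoke Theorem~\ref{thm:D}. First I would verify that $L^q(\RR^n,w)$ meets the standing hypotheses of that theorem. It is a Banach space, and the lattice property is immediate from the definition of the weighted norm: if $|f|\le|g|$ a.e.\ then $\int_{\RR^n}|f|^q w\le\int_{\RR^n}|g|^q w$, hence $\|f\|_{L^q(\RR^n,w)}\le\|g\|_{L^q(\RR^n,w)}$. To see that $L^q(\RR^n,w)\subset L^1_{\loc}(\RR^n)$, for a compact set $K$ I would split $\int_K|f|=\int_K|f|\,w^{1/q}w^{-1/q}\,dx$ and apply H\"older's inequality with exponents $q$ and $q/(q-1)$, the finiteness of $\int_K w^{-1/(q-1)}\,dx$ being guaranteed by $w\in A_q(\RR^n)$; this is essentially the estimate already carried out in the proof of Proposition~\ref{prop:generalizedGL}(v).

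The only substantive ingredient is the boundedness of the maximal operator $M\colon L^q(\RR^n,w)\to L^q(\RR^n,w)$. This is precisely Muckenhoupt's classical theorem \cite{Muc1972}, which states that for $1<q<\infty$ the operator $M$ is bounded on $L^q(\RR^n,w)$ if and only if $w\in A_q(\RR^n)$. Since $w\in A_q(\RR^n)$ by hypothesis, boundedness holds, and all the assumptions of Theorem~\ref{thm:D} are satisfied.

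With these verifications in hand, Theorem~\ref{thm:D} applies directly with $\Omega=\RR^n$ and $X=L^q(\RR^n,w)$, identifying $W^1 X(\RR^n)$ with $W^{1,q}(\RR^n,w)$ and $X(\RR^n)$ with $L^q(\RR^n,w)$. It yields that $f\in W^{1,q}(\RR^n,w)$ if and only if $f\in L^q(\RR^n,w)$ and there is a nonnegative $g\in L^q(\RR^n,w)$ satisfying \eqref{description} for all $x$, $y\in\RR^n\setminus S$ with $B(x,3|x-y|)\subset\RR^n$. Because $\Omega=\RR^n$, the geometric restriction $B(x,3|x-y|)\subset\Omega$ is automatically met and simply drops out, leaving the inequality for a.e.\ $x$, $y\in\RR^n$. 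Thus the corollary is a specialization of Theorem~\ref{thm:D} to the weighted Lebesgue space, and I do not anticipate any genuine obstacle beyond checking the hypotheses: the real content is supplied by Muckenhoupt's characterization, which does the heavy lifting.
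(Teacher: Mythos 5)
Your proposal is correct and follows exactly the route the paper intends: the corollary is obtained by specializing Theorem~\ref{thm:D} to $X=L^q(\RR^n,w)$, with the lattice property and the inclusion in $L^1_{\loc}$ checked via the $A_q$ condition and H\"older, and the boundedness of $M$ supplied by Muckenhoupt's theorem \cite{Muc1972}, which the paper cites for precisely this purpose just before stating the corollary. The observation that the constraint $B(x,3|x-y|)\subset\Omega$ is vacuous when $\Omega=\RR^n$ is also the reason the paper can state the inequality for a.e.\ $x$, $y\in\RR^n$ without restriction.
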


Due to Proposition~\ref{HLGL}, the following result is the corollary of Theorem~\ref{thm:D} for grand Sobolev spaces, defined on bounded domains.
\begin{corollary}\label{40} 
	Let $1<q<\infty$ and  $\Omega\subset\mathbb
	R^n$ be a bounded domain.
	A~function  
	$f$ belongs to $W^{1,q)}(\Omega)$
	if and only if 
	$f\in L^{q)}(\Omega)$ and 
	there exist a non negative function $g\in L^{q)}(\Omega)$ 
	such that the inequality
	\eqref{description}
	holds for all points~
	$x$, $y\in \Omega\setminus S$ with $B(x,3|x-y|)\subset\Omega$,
	where $S\subset\Omega$ is a set of measure zero.
\end{corollary}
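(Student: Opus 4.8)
The plan is to read Corollary~\ref{40} as the specialization of the main theorem to the particular Banach space $X(\Omega)=L^{q)}(\Omega)$. Once this identification is made, the proof reduces to two routine tasks: checking that $L^{q)}(\Omega)$ satisfies the three hypotheses of Theorem~\ref{thm:D}, and verifying that the abstract Sobolev space $W^1L^{q)}(\Omega)$ produced by that theorem is literally the grand Sobolev space $W^{1,q)}(\Omega)$.

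First I would confirm that $L^{q)}(\Omega)$ is an admissible ambient space. Choosing the weights $a\equiv 1$ and $w\equiv 1$ in Definition~\ref{def:generalizedGL} identifies $L^{q)}(\Omega)$ with $L^{q)}_1(\Omega,1)$; since $\Omega$ is bounded, the constant $a\equiv 1$ lies in $L^q(\Omega)$, so Proposition~\ref{prop:generalizedGL}(i) gives that $L^{q)}(\Omega)$ is a Banach space, part~(ii) is precisely the lattice property, and part~(v)—using that the constant weight $1$ belongs to $A_q$—yields the inclusion $L^{q)}(\Omega)\subset L^1_{\loc}(\Omega)$. Alternatively one may simply invoke Remark~\ref{rem:notBFS}, by which $L^{q)}(\Omega)$ is a Banach function space for bounded $\Omega$, and such spaces automatically embed into $L^1_{\loc}$ and enjoy the lattice property. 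The boundedness of the maximal operator is then supplied by Proposition~\ref{HLGL} with $t=\infty$, which reads $\|Mg\|_{L^{q)}(\Omega)}\le K_q\|g\|_{L^{q)}(\Omega)}$, i.e. $M\colon L^{q)}(\Omega)\to L^{q)}(\Omega)$ is bounded. With all hypotheses of Theorem~\ref{thm:D} verified for $X(\Omega)=L^{q)}(\Omega)$, the theorem gives that $f\in W^1L^{q)}(\Omega)$ if and only if $f\in L^{q)}(\Omega)$ and there is a non negative $g\in L^{q)}(\Omega)$ satisfying \eqref{description} for all $x$, $y\in\Omega\setminus S$ with $B(x,3|x-y|)\subset\Omega$, which is exactly the pointwise condition claimed.

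It remains to match $W^1L^{q)}(\Omega)$ with $W^{1,q)}(\Omega)$. By the definition of $W^1X(\Omega)$, membership means $f$ is weakly differentiable with $f$, $\nabla f\in L^{q)}(\Omega)$, and this is equivalent to finiteness of $|||f|||_{W^{1,q)}(\Omega)}=\|f\|_{L^{q)}(\Omega)}+\|\nabla f\|_{L^{q)}(\Omega)}$. The embedding $L^{q)}(\Omega)\hookrightarrow L^{q-\varepsilon}(\Omega)$ shows such an $f$ lies in $\bigcap_{0<\varepsilon<q-1}W^{1,q-\varepsilon}(\Omega)$, so the two spaces coincide as sets, while Proposition~\ref{kl} makes their norms equivalent; hence $W^1L^{q)}(\Omega)=W^{1,q)}(\Omega)$ and the characterization just obtained is the statement of the corollary. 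Since every ingredient is an already-established result, there is no genuine obstacle: the only point demanding care is the bookkeeping confirming that $L^{q)}(\Omega)$ falls under the scope of Theorem~\ref{thm:D} and that the grand Sobolev norm and the space $W^1L^{q)}(\Omega)$ describe the same object.
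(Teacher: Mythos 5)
Your proposal is correct and follows exactly the route the paper intends: the paper states Corollary~\ref{40} as an immediate consequence of Theorem~\ref{thm:D} applied to $X(\Omega)=L^{q)}(\Omega)$, with the boundedness of the maximal operator supplied by Proposition~\ref{HLGL} and the identification $W^1L^{q)}(\Omega)=W^{1,q)}(\Omega)$ implicit via Proposition~\ref{kl}. You have merely written out the hypothesis-checking that the paper leaves to the reader, and all of it checks out.
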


\begin{remark}
If $f \in W^{1,q)}(\Omega) $ then $|f|\in W^{1,q)}(\Omega) $ because by Corollary~\ref{40}, there exist a non negative function $g\in L^{q)}(\Omega)$ and a set $S\subset \Omega$ of measure zero such that 
$$\Big||f(x)|-|f(y)|\Big|\leq|f(x)-f(y)|\leq |x-y|
(g(x)+g(y))$$
holds for all points~
$x$, $y\in \Omega\setminus S$ with $B(x,3|x-y|)\subset\Omega$.
\end{remark}

And finally, we formulate the pointwise characterization for generalized grand Sobolev spaces.

\begin{corollary}\label{cor:GS} 
Let $1<q<\infty$ and $\delta>0$ be such that $a^\delta\in A_q(\mathbb R^n)$ for some $a\in L^q(\mathbb R^n,w)$, and $w\in A_q(\mathbb R^n)$. 
Then  $f \in W^{1,q)}_{a}(\mathbb R^n,w)$ if and only if $f\in L^{q)}_{a}(\mathbb R^n,w)$ and  there exists a non negative function $g\in L^{q)}_{a}(\mathbb R^n,w)$ such that   the inequality
\eqref{description}
holds for $x$, $y \in \mathbb R^n$ a.e.
\end{corollary}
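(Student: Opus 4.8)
The plan is to deduce Corollary~\ref{cor:GS} directly from the main Theorem~\ref{thm:D} by taking $X(\mathbb R^n)=L^{q)}_{a}(\mathbb R^n,w)$. First I would observe that, under this choice, the Sobolev space $W^1X(\mathbb R^n)$ introduced in Section~\ref{sec:pointwise_estimates} coincides as a set with $W^{1,q)}_{a}(\mathbb R^n,w)$: both consist of those $f\in L^{q)}_{a}(\mathbb R^n,w)$ whose weak gradient again lies in $L^{q)}_{a}(\mathbb R^n,w)$, and the two norms are equivalent by the estimate~\eqref{kl2}. Hence the membership assertion ``$f\in W^{1,q)}_{a}(\mathbb R^n,w)$'' is literally ``$f\in W^1X(\mathbb R^n)$'', and it suffices to check that $X(\mathbb R^n)=L^{q)}_{a}(\mathbb R^n,w)$ meets every hypothesis of Theorem~\ref{thm:D}.

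Second, I would verify those hypotheses one by one using the results of Section~\ref{sec:weighted_GS}. Since $a\in L^q(\mathbb R^n,w)$, Proposition~\ref{prop:generalizedGL}(i) guarantees that $L^{q)}_{a}(\mathbb R^n,w)$ is a Banach space. The lattice property is exactly Proposition~\ref{prop:generalizedGL}(ii). Because $w\in A_q(\mathbb R^n)$ and $a^{\delta}\in A_q(\mathbb R^n)$ for some $\delta>0$, Proposition~\ref{prop:generalizedGL}(v) yields the inclusion $L^{q)}_{a}(\mathbb R^n,w)\subset L^1_{\loc}(\mathbb R^n)$ required for $X(\mathbb R^n)$ to be an admissible space in Theorem~\ref{thm:D}. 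Finally, the boundedness of the maximal operator $M\colon L^{q)}_{a}(\mathbb R^n,w)\to L^{q)}_{a}(\mathbb R^n,w)$ is precisely the content of Theorem~\ref{thm:E}, whose hypotheses (the same $A_q$ conditions on $w$ and $a^{\delta}$, together with $a\in L^q(\mathbb R^n,w)$) coincide with those assumed here.

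Third, I would invoke Theorem~\ref{thm:D} with $\Omega=\mathbb R^n$. The only simplification over the general statement is geometric: the side condition $B(x,3|x-y|)\subset\Omega$ holds automatically for every $x,y$ when $\Omega=\mathbb R^n$, so the pointwise inequality~\eqref{description} is obtained for a.e.\ $x,y\in\mathbb R^n$ with no restriction, matching the claimed form. Both implications of the ``if and only if'' then follow at once from the corresponding implications in Theorem~\ref{thm:D}, with the non-negative $g\in L^{q)}_{a}(\mathbb R^n,w)$ produced, in the necessity direction, as a constant multiple of $M(|\nabla f|)$.

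I expect no genuinely hard analytic step, since all the heavy lifting is already carried out in Theorem~\ref{thm:D} and Theorem~\ref{thm:E}. The one point that must be handled with care is that $L^{q)}_{a}(\mathbb R^n,w)$ is in general \emph{not} a Banach function space (Remark~\ref{rem:notBFS}), so one cannot appeal to the rearrangement-invariant machinery of Section~\ref{sec:pointwise_estimates_BFS}; instead the applicability of Theorem~\ref{thm:D} rests solely on the three structural facts verified above --- that $L^{q)}_{a}(\mathbb R^n,w)$ is a Banach space contained in $L^1_{\loc}$, enjoys the lattice property, and supports a bounded maximal operator. Confirming that the $A_q$ hypotheses are exactly what is needed to secure all three simultaneously is therefore the crux of the bookkeeping.
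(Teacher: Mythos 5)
Your proposal is correct and follows exactly the route the paper intends: the corollary is obtained by applying Theorem~\ref{thm:D} with $X(\mathbb R^n)=L^{q)}_{a}(\mathbb R^n,w)$, whose hypotheses (Banach space contained in $L^1_{\loc}$, lattice property, bounded maximal operator) are supplied by Proposition~\ref{prop:generalizedGL} and Theorem~\ref{thm:E}, with the ball condition vacuous for $\Omega=\mathbb R^n$. Your remark that one cannot rely on the Banach function space framework here, and must instead check the structural hypotheses directly, is precisely the point the paper makes in Remark~\ref{rem:notBFS}.
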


\section*{Acknowledgement}
The authors thank the anonymous reviewer for critically reading and comments, which helped
improve and clarify this manuscript.

PJ acknowledges the MATRICS Research Grant No.~MTR/2017/000126 of SERB, Department of Science and Technology, India.
AM was partially funded by the Austrian Science Fund (FWF) through the project M~2670.
AM and SV were supported by the Mathematical Center in Akademgorodok under agreement No.~075-15-2019-1613 with the Ministry of Science and Higher Education of the Russian Federation.


\newpage
\bibliographystyle{plain}
\bibliography{biblio_PMV}

\end{document}